\documentclass[10pt,a4paper,oneside,english]{amsart}
\usepackage[T1]{fontenc}
\usepackage[latin9]{inputenc}
\usepackage{units}
\usepackage{amsthm}
\usepackage{amssymb}
\usepackage{enumerate}

\makeatletter


\numberwithin{equation}{section}
\numberwithin{figure}{section}
\theoremstyle{plain}

  \theoremstyle{plain}
  
  \theoremstyle{plain}
  
  \theoremstyle{remark}
  
  \theoremstyle{plain}

\@ifundefined{date}{}{\date{}}

\usepackage{amsthm}

\usepackage{cite}

\newtheorem{theorem}{Theorem}[section]

\newtheorem{corollary}[theorem]{Corollary}

\newtheorem{definition}[theorem]{Definition}
\newtheorem{example}[theorem]{Example}

\newtheorem{proposition}[theorem]{Proposition}
\newtheorem{remark}[theorem]{Remark}

\def\x{\overline{x}}

\def\int{\textrm{int}}

\def\x{\overline{x}}

\def\z{\overline{z}}

\def\R{\mathbb{R}}

\def\N{\hbox{{\rm I}\kern-.2em\hbox{{\rm N}}}}




%

\providecommand{\lemmaname}{Lemma}
  \providecommand{\propositionname}{Proposition}
  \providecommand{\remarkname}{Remark}
\providecommand{\theoremname}{Theorem}


\usepackage{babel}
\providecommand{\lemmaname}{Lemma}
  \providecommand{\propositionname}{Proposition}
  \providecommand{\remarkname}{Remark}
\providecommand{\theoremname}{Theorem}

\usepackage{babel}
\providecommand{\corollaryname}{Corollary}
  \providecommand{\lemmaname}{Lemma}
  \providecommand{\propositionname}{Proposition}
  \providecommand{\remarkname}{Remark}
\providecommand{\theoremname}{Theorem}

\makeatother

\usepackage{babel}
  \providecommand{\corollaryname}{Corollary}
  \providecommand{\lemmaname}{Lemma}
  \providecommand{\propositionname}{Proposition}
  \providecommand{\remarkname}{Remark}
\providecommand{\theoremname}{Theorem}

\begin{document}
	
	\title[Projected Solutions for Quasi Equilibrium Problems]{ On Projected Solutions for Quasi Equilibrium Problems with Non-self Constraint Map}

	\author{M. Bianchi}

	\address{Dipartimento di Matematica per le Scienze Economiche, Finanziarie ed Attuariali, Universit{\`a} Cattolica del Sacro Cuore, Milano, Italy}

	\email{monica.bianchi@unicatt.it}

	\author{E. Miglierina}

	\address{Dipartimento di Matematica per le Scienze Economiche, Finanziarie ed Attuariali, Universit{\`a} Cattolica del Sacro Cuore, Milano, Italy }

	\email{enrico.miglierina@unicatt.it}

	\author{M. Ramazannejad}

	\address{Dipartimento di Matematica per le Scienze Economiche, Finanziarie ed Attuariali, Universit{\`a} Cattolica del Sacro Cuore, Milano, Italy}

	\email{maede.ramazannejad@unicatt.it}
	\begin{abstract}
		\noindent In a normed space setting, this paper studies the conditions under which the projected solutions to a quasi equilibrium problem with non-self constraint map exist. Our approach is based on an iterative algorithm which gives rise to a sequence such that, under the assumption of asymptotic regularity, its limit points are projected solutions.
	Finally, as a particular case, we discuss the existence of projected solutions to a quasi variational inequality problem.	
	\end{abstract}
	
	\subjclass[2020]{Primary: 49J53 Secondary: 49J40; 47H05}

	\keywords{	Projected solutions; Quasi equilibrium problem; Quasi variational inequality; Non-self constraint map; Brezis pseudomonotonicity.}
	
	\maketitle

\section{Introduction}
Given a nonempty subset $C$ of a  normed space $X$ and a function  $f:X\times X\to \R $,
the \emph{equilibrium problem}  EP$(f,C)$  is defined as follows: find $\x\in C$ such that
$$
f(\x,y)\ge 0,\quad \forall y\in C.
$$
The (possibly empty) set of solutions of $EP(f,C)$ will be denoted by Sol$(f,C).$
In the sequel $f$ will be called \emph{bifunction}, as usual in literature on equilibrium problems.

The first existence results for equilibrium problems  date back to the seventies and are attributed to Fan \cite{Fan72} and Brezis, Nirenberg and Stampacchia \cite{BreNiSta72} while the term \emph{equilibrium problem} was coined later in the nineties  because
it is equivalent to find the equilibrium points of several problems,
like  minimization problems, saddle point problems, Nash equilibrium problems,
variational inequality problems, fixed point problems, and so forth (see for instance the seminal paper \cite{BluOet94}).

Related to EP$(f,C)$  in literature has been also considered the so-called \emph{quasi equilibrium problem}, that is an equilibrium problem with a constraint set depending on the current point. More precisely, given a set-valued map $\Phi: C \rightrightarrows C,$ the quasi equilibrium problem $QEP(f,\Phi)$ requires to
$$
\text{find}\quad \x \in \Phi(\x)\quad \text{such that}\, f(\x,y) \ge 0, \quad \forall y \in \Phi(\x).
$$
Also this problem  encompasses,  as special cases, many relevant problems which model situations arising in applications like quasi variational inequalities, generalized Nash equilibrium problems, mixed quasi variational-like inequalities (see \cite{AusCotIus2017} and the references therein).

In the literature, to prove existence results for $QEP(f,\Phi)$, the key step goes through fixed-point techniques. On the other hand,  in \cite{Bikapi21} the authors, generalizing an original idea by Konnov in a finite-dimensional setting (see \cite{Kon2019}), proposed  a regularized version
of the penalty method to establish existence results  in reflexive Banach spaces by replacing the quasi equilibrium problem with a sequence of usual equilibrium problems.

However, in the definition of $QEP(f,\Phi)$ the map $\Phi$ is a self-map, i.e.  $\Phi: C \rightrightarrows C,$  while  in many applications this condition is not fulfilled. For this reason, in \cite{AusSulVet2016} the authors  introduced the more general notion of \emph{projected solutions} of a quasi variational inequalities in finite dimension and later in  \cite{CoZu2019}  the  case of quasi equilibrium problem was addressed. Due to possible more realistic applications, recently there has been an increasing interest in studying existence of projected solutions in more general settings (see,  for instance, \cite{Milasi22,BuCo21}).

To fix the idea, by assuming $\Phi:C\rightrightarrows X$, a point $x_0\in C$ is called a \emph{projected solution} of $QEP(f,\Phi)$ if
\begin{eqnarray}\label{QEP}
\text{there exists}\quad z_0\in \mathrm{Sol}(f,\Phi(x_0))\quad\text{such that}\quad x_0\in P_C(z_0)
\end{eqnarray}
where $P_C:X\rightrightarrows C$ is the classical metric projection map.

Setting for semplicity $S(x) = \mathrm{Sol}(f,\Phi(x))$, the projected solutions can be obtained as the fixed points of the map
$\mathcal{T}:C\rightrightarrows C,$ given by the composition of the two maps $P_C$ and $S$:
$$
\mathcal{T}(x)=P_C(S(x)):=\cup_{z\in S(x)}P_C(z).
$$
Note that in case $\Phi:C\rightrightarrows C$, then $\mathcal{T}(x)=S(x)$ for all $x \in C$, and $x_0$ turns out to be   a classical solution of the quasi equilibrium problem.

Having in mind the description above, it is clear that a key tool to study the existence of projected  solutions is fixed points theorems;
in particular, a generalization of the Kakutani fixed point theorem for factorizable set-valued map was recently applied
(see  \cite{Milasi22,AusSulVet2016}),  but the drawback of this approach is the compactness assumption with respect to
the strong topology required to apply the fixed point theorem,  which is very demanding in a normed   space setting.

To overcome this problem, in this paper we propose an iterative procedure to find projected solutions of quasi equilibrium problems in a general normed space setting. The particular case of set-valued variational inequalities will be also investigated.
Our approach is based on an iterative procedure which gives rise to a sequence $\{x_n\} \subseteq C$. In case of asymptotic regularity of this sequence, we can show that its limit points are projected solutions. Moreover, we find a sufficient condition to ensure the asymptotic regularity of the sequence $\{x_n\}$. This condition also implies the strong convergence of the whole sequence $\{x_n\}$, whenever the space $X$ is a Banach space.

The organization of this paper is as follows. In Section 2, we recall the necessary definitions, the concept of metric projection and their related properties. Some existing results for equilibrium problems are also mentioned in this section. The existence of projected solutions to quasi equilibrium problem with non-self constraint map is addressed  in Section 3 where we also discuss some sufficient conditions for asymptotic regularity of the sequence generated by the iterative procedure. Finally, in Section 4, we use the results of the previous section to study the existence of projected solutions to quasi variational inequalities problems in normed spaces.

\section{Preliminaries}

In this section we recall some notions  and results
useful for the forthcoming discussions.
In the sequel  $X$ will be a normed space and  $X^*$ its topological dual.  We will denote by  $\to ^s$, $\to^w$  and $\to^{w^*} $ the strong, the weak  and the weak$^*$  convergence, respectively. $B_X$ denotes the closed unit ball of $X$.

\subsection{Some properties of set valued maps}
In the sequel we will deal with the constraint set-valued map $\Phi:C\rightrightarrows X$, $C \subseteq X$,  and,
in case of variational inequalities, with a set valued map  $T: X \rightrightarrows X^*$.
We will  denote by $\mathrm{gph}(\cdot)$ and by $\mathrm{dom}(\cdot )$,  the graph and  the domain of a set-valued map, respectively.
We begin by recalling some definitions.

\begin{definition}
	A set valued map  $\Phi:C\rightrightarrows X$, $C \subseteq  X $,  is said to be
	
	\begin{enumerate}
		\item  (see \cite{Bikapi21})  \emph{sequentially (weakly) lower semicontinuous} at $x\in C$ if for every ($x_n \to^w x$)  $x_n \to^s x$, $\{x_n\}\subseteq C,$ and for every $y\in \Phi(x),$
		there exists a subsequence $\{x_{n_k}\}$ and $y_{k}\in \Phi(x_{n_k})$ such that $y_k\to^s y$;
		
		\item  \emph{(weakly) closed } if its graph is a (weakly) closed subset of $C \times X$.
	\end{enumerate}
\end{definition}

Since Definition 1. involves subsequences, it slightly weakens the classical definition of  lower semicontinuity given, for instance,  in Definition 1.4.2 in \cite{AubFra}. Moreover, it is worth noting that sequential weak lower semicontinuity implies sequential  lower semicontinuity, whereas the reverse implication does not hold, as shown in the following example. Of course, we point out that the two notions are equivalent whenever $X$ is a finite dimensional normed space.

\begin{example}
	Let $X=\ell_2$. Given the set
	$$
	C=\left\lbrace x=\left(x^k\right)_{k=1}^\infty\in \ell_2:x^k\geq 0\,\, \forall k\in \N,x^1+x^2\geq 1\right\rbrace \cap B_X,
	$$
	let us consider the set valued map $\Phi:C\rightrightarrows X$ defined by
	$$
	\Phi(x)=3\frac{x}{\|x\|}+B_X
	$$
	for every $x \in C$.
	It is a simple matter that $\Phi$ is sequentially  lower semicontinuous for every $x \in C$.
	Now, let us consider the point $\bar{x}=\left(\frac{1}{2},\frac{1}{2},0,0,\dots\right)\in C$ and the sequence $\{x_n\}\subset C$ where
	$$
	x_n=\left(\frac{1}{2},\frac{1}{2},0,\dots,0,\underset{(n+3)\text{-th place}}{\underbrace{\frac{\sqrt{2}}{2}}},0\dots\right).
	$$
	It is easy to see that $x_n \to^w \bar{x}$ and that
	$$
	\inf_{\substack{u \in \Phi(\overline{x})\\
			u^{\prime} \in \Phi(x_n)}}
	\|u-u^{\prime}\| \ge 3\sqrt{2-\sqrt{2}} -2 >0.
	$$
	Therefore, we conclude that $\Phi$ is not sequentially weakly lower semicontinuous at $\bar{x}$.
\end{example}

The following proposition gives two sufficient conditions ensuring that a sequentially lower semicontinuous map is also sequentially weakly lower semicontinuous.

\begin{proposition}\label{Pro:semicontinuity}
	Let $C$ be a subset of a normed space $X$ and let  $\Phi:C\rightrightarrows X$ be a sequentially lower semicontinuos map at every $x \in C$. Then, $\Phi$ is sequentially weakly lower semicontinuous at every $x \in C$ whenever one of the following conditions hold:
	\begin{enumerate}
		\item $C$ is a compact set;
		\item $X$ is uniformly convex  and $\Phi(x)\subseteq \Phi(y)$ for every $x,y \in C$ such that $\|x\|\leq \|y\|$.
	\end{enumerate}
\end{proposition}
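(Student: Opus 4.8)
The plan is to prove each of the two cases separately by a direct sequential argument, starting from the definition of sequential weak lower semicontinuity. So fix $x\in C$, take a sequence $x_n\to^w x$ with $\{x_n\}\subseteq C$, and fix $y\in\Phi(x)$; the goal in both cases is to extract a subsequence $\{x_{n_k}\}$ and points $y_k\in\Phi(x_{n_k})$ with $y_k\to^s y$. The common theme is that we want to upgrade the given weak convergence $x_n\to^w x$ to strong convergence along a subsequence, and then simply invoke the hypothesis of sequential (strong) lower semicontinuity of $\Phi$.

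\emph{Case (1).} If $C$ is compact (in the norm topology), then the sequence $\{x_n\}$ has a norm-convergent subsequence $x_{n_j}\to^s \tilde x$ for some $\tilde x\in C$. Strong convergence implies weak convergence, so $x_{n_j}\to^w\tilde x$; but we also have $x_{n_j}\to^w x$, and weak limits are unique in a normed space, hence $\tilde x=x$. Thus $x_{n_j}\to^s x$. Now apply sequential lower semicontinuity of $\Phi$ at $x$ to the sequence $\{x_{n_j}\}$: there is a further subsequence $\{x_{n_{j_k}}\}$ and $y_k\in\Phi(x_{n_{j_k}})$ with $y_k\to^s y$. This is exactly what is required, so $\Phi$ is sequentially weakly lower semicontinuous at $x$.

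\emph{Case (2).} Here the idea is to use the Radon--Riesz (Kadec--Klee) property, which holds in any uniformly convex space: if $x_n\to^w x$ and $\|x_n\|\to\|x\|$, then $x_n\to^s x$. So we would like to reduce to a subsequence along which the norms converge to $\|x\|$. By weak lower semicontinuity of the norm, $\|x\|\le\liminf_n\|x_n\|$; pass to a subsequence $\{x_{n_j}\}$ along which $\|x_{n_j}\|\to r:=\liminf_n\|x_n\|\ge\|x\|$. If $r=\|x\|$ we are in the Radon--Riesz situation and conclude $x_{n_j}\to^s x$, then finish as in Case (1) by invoking sequential lower semicontinuity. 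If $r>\|x\|$, the monotonicity hypothesis on $\Phi$ enters: eventually $\|x_{n_j}\|\ge\|x\|$ (say for $j$ large, after discarding finitely many terms, since $\|x_{n_j}\|\to r>\|x\|$), hence $\Phi(x)\subseteq\Phi(x_{n_j})$, so $y\in\Phi(x_{n_j})$ for all such $j$; taking $y_k=y$ constantly gives trivially $y_k\to^s y$ with $y_k\in\Phi(x_{n_{j_k}})$, and again weak lower semicontinuity holds at $x$.

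\textbf{Main obstacle.} The delicate point is Case (2): one must be careful that the monotonicity assumption $\Phi(x)\subseteq\Phi(y)$ whenever $\|x\|\le\|y\|$ is used only when the norm inequality genuinely holds along the chosen subsequence, and that the dichotomy $r=\|x\|$ versus $r>\|x\|$ is handled cleanly — in the borderline regime one cannot assume $\|x_{n_j}\|\ge\|x\|$ for \emph{all} $j$, only eventually, which is enough. One should also make sure the invocation of Radon--Riesz is legitimate, i.e. recall explicitly that uniform convexity implies the Kadec--Klee property (this may deserve a one-line citation). Everything else is routine: uniqueness of weak limits, weak lower semicontinuity of the norm, and passing to successive subsequences.
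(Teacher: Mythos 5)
Your proposal is correct and follows essentially the same route as the paper: case (1) is the straightforward extraction of a norm-convergent subsequence (which the paper omits), and case (2) is exactly the paper's dichotomy on $\liminf_n\|x_n\|$ versus $\|x\|$, using the monotonicity hypothesis in the strict case and the Kadec--Klee property of uniformly convex spaces in the equality case. No gaps.
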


\begin{proof}
	Since, under condition (1), the proof is straightforward, we prove only the sequential weak lower semicontinuity of $\Phi$ under assumption (2).
	
	Let $x \in C$ and let $\{x_n\}\subseteq C$ be a sequence such that $x_n\to^w x$. Then, it holds
	$$
	\left\|x\right\|\leq \liminf_{n \to \infty}\left\|x_n\right\|.
	$$
	If $\left\|x\right\|< \liminf_{n \to \infty}\left\|x_n\right\|$, there exists a subsequences $\{x_{n_k}\}$ of $\{x_n\}$ and $k_0\in \N$ such that $\Phi(x)\subseteq \Phi(x_{n_k})$ for every $k>k_0$. Let $y \in \Phi(x)$, by considering $y_{k}=y$ for every $k>k_0$, we have a sequence $\{y_k\}$ such that $y_k \in \Phi(x_{n_k})$ and $y_k \to^s y$. If $\left\|x\right\|= \liminf_{n \to \infty}\left\|x_n\right\|$, then there exists a subsequence $\{x_{n_k}\}$ of $\{x_n\}$ such that $\lim_{k \to \infty} \|x_{n_k} \| = \|x\|$. Since $X$ is uniformly convex, we have that $x_{n_k}\to^s x$. Therefore, by sequential lower semicontinuity of $\Phi$, we conclude the proof. \qed
\end{proof}

\begin{definition}
	Let $T: X \rightrightarrows X^*$ and $C \subseteq \mathrm{dom}(T)$. The set valued map $T$ is said to be
	\begin{enumerate}
		\item \emph{s-$w^*$}-\emph{closed} on  $C$  if, for any $(x_n,x^*_n)\in \mathrm{gph}(T|_C)$ such that $x_n\to ^s x \in C$ and $x^*_n\to^{w^*}x^*,$ one has that $(x,x^*)\in \mathrm{gph}(T|_C)$;
		\item \emph{ bounded }if it maps bounded subsets of its domain into bounded sets of $X^*$;
		\item  \emph{Brezis pseudomonotone} on  $C $  if,  for every $\{x_n\} \subseteq C$ such that $x_n\to^w x\in C,$
		and for every $x^*_n \in T (x_n)$ with
		$$
		\liminf _{n\to +\infty} \langle x^*_n, x-x_n\rangle \ge 0,
		$$
		one has that, for every $y\in C,$ there exists $x^*(y )\in T (x)$ such that
		$$
		\langle x^*(y), y-x\rangle\ge \limsup_{n\to +\infty} \langle x^*_n, y-x_n \rangle;
		$$
		\item   \emph{of type $S_+$} on  $C$   if,  for every $\{x_n\} \subseteq C$ such that $x_n\to^w x\in C,$
		if there exists $x^*_n \in T (x_n),$ with
		$$
		\liminf_{n\to +\infty} \langle x^*_n, x - x_n\rangle \ge 0,
		$$
		it follows that $x_n\to^s x$ in $C$;
		
		\item \emph{uniformly monotone} on $C$ if there exists $\beta : \R_+ \to \R_+$ strictly increasing with $\beta(0)=0$ and $\lim_{t \to +\infty} \beta(t)= +\infty$, such that
		$$
		\langle y_1^*-y_2^*,x_1-x_2 \rangle \geq  \beta (\left\|x_1-x_2 \right\|) \left\|x_1-x_2 \right\|
		$$
		for every $(x_1,y_1^*),\,(x_2,y_2^*)\in \text{graph}(T|_C)$;
		\item \emph{strongly monotone} on $C$ if there exists $k>0$ such that
		$$
		\langle y_1^*-y_2^*,x_1-x_2 \rangle \geq k \left\|x_1-x_2 \right\|^2
		$$
		for every $(x_1,y_1^*),\,(x_2,y_2^*)\in \text{graph}(T|_C)$.
	\end{enumerate}
\end{definition}

In literature  set valued maps of type $S_+$ are involved in the study of existence results for  variational inequalities (see, for instance,  \cite{Cub97})  and they  will play a crucial role in the sequel. In any locally uniformly convex Banach space the duality map is an example of single valued map of type $S_+$ (see Example 6.8, Ch.3 in \cite{Hu97}); other examples of set valued maps of type $S_+$ are the uniformly monotone  ones. For completeness, we prove it in  the following proposition.

{\begin{proposition}\label{prop:unifmon_vs_S+}
		If $T:X\rightrightarrows X^*$  is uniformly monotone on $C \subseteq \mathrm{dom}(T)$,  then $T$ is of type $S_+$ on $C$.
	\end{proposition}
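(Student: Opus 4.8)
The plan is to unwind both definitions and to exploit the monotonicity inequality at the pair formed by a generic term of the sequence and its weak limit. Concretely, I would fix a sequence $\{x_n\}\subseteq C$ with $x_n\to^w x\in C$ and elements $x^*_n\in T(x_n)$ satisfying $\liminf_{n\to+\infty}\langle x^*_n,x-x_n\rangle\ge 0$. Since $x\in C\subseteq\mathrm{dom}(T)$, I choose any $x^*\in T(x)$; applying uniform monotonicity to the pairs $(x_n,x^*_n)$ and $(x,x^*)$ in $\mathrm{graph}(T|_C)$ gives
$$
\langle x^*_n-x^*,\,x_n-x\rangle\ \ge\ \beta(\|x_n-x\|)\,\|x_n-x\|\ \ge\ 0 .
$$

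Next I would bound the left-hand side from above. Writing $\langle x^*_n-x^*,x_n-x\rangle=-\langle x^*_n,x-x_n\rangle-\langle x^*,x_n-x\rangle$, the second term tends to $0$ because $x_n\to^w x$ and $x^*\in X^*$, while $\limsup_{n\to+\infty}\bigl(-\langle x^*_n,x-x_n\rangle\bigr)=-\liminf_{n\to+\infty}\langle x^*_n,x-x_n\rangle\le 0$ by hypothesis. Hence $\limsup_{n\to+\infty}\langle x^*_n-x^*,x_n-x\rangle\le 0$, and combining this with the lower bound above squeezes the quantity, yielding
$$
\lim_{n\to+\infty}\beta(\|x_n-x\|)\,\|x_n-x\|=0 .
$$

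Finally I would convert this into strong convergence by contradiction. If $x_n\not\to^s x$, there are $\varepsilon>0$ and a subsequence with $\|x_{n_k}-x\|\ge\varepsilon$ for all $k$; since $\beta:\R_+\to\R_+$ is strictly increasing with $\beta(0)=0$, the map $t\mapsto t\,\beta(t)$ is nonnegative and nondecreasing, so $\beta(\|x_{n_k}-x\|)\,\|x_{n_k}-x\|\ge\varepsilon\,\beta(\varepsilon)>0$, contradicting the limit just obtained. Therefore $x_n\to^s x$ in $C$, which is precisely the assertion that $T$ is of type $S_+$ on $C$. The structural idea — monotonicity evaluated at $(x_n,x)$ together with weak convergence annihilating the cross term $\langle x^*,x_n-x\rangle$ — is the whole argument, so I do not expect a genuine obstacle; the only point demanding care is the bookkeeping of signs in the duality pairing and the correct interchange of $\liminf$ and $\limsup$ when passing to $-\langle x^*_n,x-x_n\rangle$. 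Note also that the growth condition $\lim_{t\to+\infty}\beta(t)=+\infty$ is not needed here.
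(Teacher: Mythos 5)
Your proposal is correct and follows essentially the same route as the paper's proof: fix $x^*\in T(x)$, apply uniform monotonicity to the pairs $(x_n,x^*_n)$ and $(x,x^*)$, use weak convergence to kill the term $\langle x^*,x_n-x\rangle$ and the $\liminf$ hypothesis to bound the other term, and squeeze $\beta(\|x_n-x\|)\,\|x_n-x\|$ to zero. Your final step is in fact slightly more explicit than the paper's (which just invokes ``the properties of $\beta$''), and your observation that the growth condition $\lim_{t\to+\infty}\beta(t)=+\infty$ is not used is accurate.
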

	\begin{proof}
		Let $x\in C$, $x^* \in T(x)$  and let $\{x_n\}\subseteq C$ be a sequence such that $x_n \to^w x$.
		If there exists $x_n^* \in T(x_n)$ such that
		\begin{equation}\label{eq:S+ prop1}
		\liminf_{n \to \infty} \langle x^*_n, x-x_n \rangle \geq 0,
		\end{equation}
		then, by the uniformly monotonicity of $T$, we have
		$$
		\langle x^*-x_n^*,x-x_n \rangle \geq  \beta(\|x-x_n\|) \left\|x-x_n
		\right\|\geq 0.
		$$
		Therefore, it holds
		\begin{equation}\label{eq:S+ prop2}
		\liminf_{n \to \infty}\left(\langle x^*,x-x_n \rangle -\langle x^*_n,x-x_n \rangle\right) \geq \liminf_{n \to \infty}  \beta(\|x-x_n\|) \left\|x-x_n\right\|\geq 0.
		\end{equation}
		Since $(x-x_n) \to^w 0$, we have $\lim_{n \to \infty}\langle x^*,x-x_n \rangle=0$. Hence, the relation (\ref{eq:S+ prop2}) becomes
		$$
		-\limsup_{n \to \infty} \langle x^*_n,x-x_n
		\rangle\geq \liminf_{n \to \infty}  \beta(\|x-x_n\|) \left\|x-x_n
		\right\|\geq 0.
		$$
		By (\ref{eq:S+ prop1}), we obtain
		$$
		0\geq \liminf_{n \to \infty}  \beta(\|x-x_n\|) \left\|x-x_n
		\right\|\geq 0.
		$$
		The same inequalities hold true for  $\limsup_{n \to \infty}  \beta(\|x-x_n\|) \left\|x-x_n
		\right\|$.
		We conclude that
		$$
		\lim_{n \to \infty}\beta(\|x-x_n\|) \left\|x-x_n
		\right\|=0.
		$$
		By considering the properties of $\beta$, we have $x_n\to ^s x$, and $T$ is of type $S_+$.
	\end{proof}
	
	\begin{remark}
		The previous result holds true for the class of strongly monotone set valued maps, since they are special cases of uniformly monotone maps.
	\end{remark}

	\subsection{Metric projection in Banach spaces}
	
	Given a nonempty subset  $C\subseteq X$,  the \emph{metric projection map}  $P_C:X\rightrightarrows C$
	is defined as
	$$
	P_C(x)=\{z\in C:\,\|x-z\|\le \|x-w\|\text{ for all } w\in C\} = \{z\in C:\,\|x-z\| = d(x,C) \}
	$$
	where $d(x,C)= \inf_{w\in C}\|x-w\| $. It is easy to show directly that $P_C(x)$ is a convex subset of $C$, provided $C$ is a convex set.
	
	A sequence $\{x_n \} \subseteq  C$ is called a \emph{minimizing sequence } for $x \in X\setminus C$ if
	$$
	\|x_n-x \| \to d(x,C).
	$$
	In the sequel  $\tau$ will denote either  the strong $(s)$ or the weak topology $(w)$ in $X$.
	\begin{definition}
		A subset $C\subseteq X$ is said to be
		\begin{itemize}
			\item \emph{proximinal} if $P_C(x) \ne \emptyset$ for all $x \in X$;
			\item a \emph{Chebyshev set} if $P_C(x)$ is a singleton for all $x \in X$;
			\item \emph{ approximatively $\tau$-compact} if for each $x\in X \setminus C$ and each minimizing sequence $\{u_n\} \subseteq C$ for $x$, there exists a subsequence $\{u_{n_k} \}$ such that $u_{n_k}  \to^{\tau}  u$, $u \in C$ (see, for instance, \cite{EfiSte1961});
			\item \emph{ boundedly $\tau$-compact}  if for each bounded  sequence $\{u_n\} \subseteq C$, there exists a subsequence $\{u_{n_k} \}$ such that $u_{n_k}  \to^{\tau} u$, $u \in C$.
		\end{itemize}
	\end{definition}
	It is well known that every  $\tau$-compact set is also boundedly $\tau$-compact and it is evident  that every boundedly $\tau$-compact set
	is in particular approximatively $\tau$-compact, since all minimizing sequences are bounded.
	
	\begin{definition}
		
		The metric projection onto a proximinal set $C$ is called \emph{norm-$\tau$ upper semicontinuous}
		(briefly, norm-$\tau$ u.s.c.) at a point $x  \in X$ provided that for each sequence $\{x_n\}$ such that $x_n \to^{s} x$ and each $\tau$-open set
		$V \supseteq P_C(x)$  we have $V \supseteq P_C(x_n)$ eventually (i.e., for $n$ sufficiently large). $P_C$ is called norm-$\tau$ u.s.c. on $X$ if it
		is norm-$\tau$ u.s.c. at each point of $x \in X.$
	\end{definition}

	If $P_C$ is single-valued (i.e., $C$ is a Chebyshev set), then
	norm-$\tau$ u.s.c. reduces to  continuity of the map $P_C$ from  $X$ with
	its norm topology into $C$ with its $\tau$ topology.
	
	We recall the following results, which provides some regularity properties of  metric projection useful in the sequel:
	
	\begin{theorem}\label{Theor:Deutsch}{\cite[Theorem 2.7]{Deu80}}
		Let $X$ be a normed space  endowed with the $\tau$ topology and  $C \subseteq X$  be an approximatively $\tau$-compact set. Then
		\begin{enumerate}[(1)]
			\item $C$ is proximinal;
			\item $P_C $ is norm-$\tau$ u.s.c.
		\end{enumerate}
		Moreover, if  C is boundedly $\tau$-compact,  then
		\begin{enumerate}[(3)]
			\item $P_C(x)$ is $\tau$-compact for each $x \in X$.
		\end{enumerate}
	\end{theorem}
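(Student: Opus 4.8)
The plan is to establish the three assertions in turn, each time combining two ingredients: approximative $\tau$-compactness, which turns a sequence realizing the relevant infimum into one having a $\tau$-convergent subsequence whose limit already lies in $C$, and the $\tau$-lower semicontinuity of the norm (mere continuity when $\tau=s$; when $\tau=w$ it is the weak sequential lower semicontinuity of $\|\cdot\|$, since $\|v\|=\sup\{\langle f,v\rangle:f\in X^*,\ \|f\|\le 1\}$ is a supremum of weakly continuous functionals). For (1), fix $x\in X$. If $x\in C$ then $d(x,C)=0$ and $P_C(x)=\{x\}\ne\emptyset$, so assume $x\in X\setminus C$ and pick a minimizing sequence $\{u_n\}\subseteq C$, i.e. $\|u_n-x\|\to d(x,C)$. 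Approximative $\tau$-compactness gives a subsequence $u_{n_k}\to^{\tau}u$ with $u\in C$, and then $\|u-x\|\le\liminf_k\|u_{n_k}-x\|=d(x,C)$ by the $\tau$-lower semicontinuity of $\|\cdot-x\|$, while $\|u-x\|\ge d(x,C)$ because $u\in C$. Hence $u\in P_C(x)$, so $C$ is proximinal.

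For (2), I would argue by contradiction. Suppose $P_C$ fails to be norm-$\tau$ u.s.c. at some $x$: there are $x_n\to^{s}x$ and a $\tau$-open set $V\supseteq P_C(x)$ with $P_C(x_n)\not\subseteq V$ for infinitely many $n$; after passing to a subsequence, choose $z_n\in P_C(x_n)\setminus V$. Because $d(\cdot,C)$ is $1$-Lipschitz,
$$
d(x,C)\le\|z_n-x\|\le\|z_n-x_n\|+\|x_n-x\|=d(x_n,C)+\|x_n-x\|\longrightarrow d(x,C),
$$
so $\|z_n-x\|\to d(x,C)$. If $x\in C$, then $d(x,C)=0$, hence $z_n\to^{s}x$ and a fortiori $z_n\to^{\tau}x$; since $V$ is a $\tau$-neighbourhood of $x=P_C(x)$, this gives $z_n\in V$ eventually, a contradiction. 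If $x\in X\setminus C$, then $\{z_n\}$ is a minimizing sequence for $x$, so by approximative $\tau$-compactness some subsequence satisfies $z_{n_k}\to^{\tau}z\in C$; exactly as in (1) one gets $\|z-x\|=d(x,C)$, i.e. $z\in P_C(x)\subseteq V$, and since $V$ is $\tau$-open and $z_{n_k}\to^{\tau}z\in V$ we obtain $z_{n_k}\in V$ eventually, again contradicting $z_n\notin V$. Hence $P_C$ is norm-$\tau$ u.s.c. everywhere.

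For (3), assume moreover that $C$ is boundedly $\tau$-compact and fix $x\in X$. The set $P_C(x)$ is bounded, being contained in $x+d(x,C)B_X$. If $\{z_n\}\subseteq P_C(x)$ is arbitrary, bounded $\tau$-compactness of $C$ yields a subsequence $z_{n_k}\to^{\tau}z$ with $z\in C$, and $\|z-x\|\le\liminf_k\|z_{n_k}-x\|=d(x,C)$ together with $z\in C$ forces $z\in P_C(x)$; thus $P_C(x)$ is $\tau$-sequentially compact. For $\tau=s$ this is compactness; for $\tau=w$, the set $P_C(x)$ is bounded and weakly sequentially compact, so the Eberlein--\v{S}mulian theorem (applied in $X$ if $X$ is a Banach space, or in its completion) upgrades this to weak compactness.

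I expect the bulk of this to be routine; the delicate part is (2). There, approximative compactness delivers only a $\tau$-convergent \emph{subsequence} of $\{z_n\}$, which is why the assertion has to be proved by contradiction rather than constructively, and the degenerate case $x\in C$ — where approximative compactness is not even available — must be disposed of separately through the elementary bound $\|z_n-x\|\le\|z_n-x_n\|+\|x_n-x\|\le 2\|x_n-x\|$. Throughout, a small recurring point is to invoke the correct form of lower semicontinuity of the norm for whichever of the two topologies $\tau$ is in force.
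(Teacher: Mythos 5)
Your argument is correct and complete in all three parts; the paper itself gives no proof of this statement, citing it directly from Deutsch, and your proof is essentially the classical one from that source (minimizing sequences plus $\tau$-lower semicontinuity of the norm, the contradiction argument for upper semicontinuity, and Eberlein--\v{S}mulian to pass from weak sequential compactness to weak compactness in part (3)). The two points that genuinely need care --- the separate treatment of $x\in C$, where approximative $\tau$-compactness gives no information, and the sequential-versus-topological compactness issue when $\tau=w$ --- are both handled properly.
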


	Since reflexive Banach spaces are characterized by the weak compactness of their unit balls,  each closed and convex (hence weakly closed) subset  $C$  of a reflexive Banach space $X$ is boundedly $w$-compact and, thanks to the Theorem \ref{Theor:Deutsch},  $C$ is proximinal and $P_C$
	is norm-weakly upper semicontinuous.
	
	In addition, if $X$ is  strictly convex,  every closed and convex subset of $X$  is a Chebyshev set and  $P_C$ is norm-weakly continuous.

	Note that if $X$ is an E-space (i.e. a reflexive Banach space,  strictly convex and with the Kadec-Klee property), then every  closed and convex
	subset of $X$ is also approximatively $s$-compact (see Theorem 10.4.6 in \cite{Luc06}).

	\subsection{Existence results for equilibrium problems}
	
	There is an extensive  literature on existence results for equilibrium problems.  Since the focus of our study is to present an iterative procedure to find out projected solutions, at first,  we are not interested  in the more general existence results for equilibrium problems and our reasoning will be based on  a well-known existence result  that was first stated by Brezis, Nirenberg and Stampacchia:
	
	\begin{theorem}(see \cite{BreNiSta72}, Theorem 1)\label{th:EPexistenceB}
		Let $C$ be a nonempty, closed and convex subset of a Hausdorff topological vector space $E$, and $f: C \times C \to \mathbb{R}$ be a bifunction satisfying the following assumptions:
		\begin{enumerate}[(i)]
			\item $f(x,x) \ge 0$ for all $x \in C;$
			\item for every $x \in C$, the set $\{ y \in C : f(x,y) <0 \}$ is convex;
			\item for every $y\in C$, the function $f(\cdot, y)$ is upper semicontinuous on the intersection of $C$ with any finite dimensional subspace $Z$ of $E;$
			\item whenever $x,y \in C$, $x_\alpha$ is a filter on $C$ converging to $x$ and $f(x_\alpha, (1-t)x+ty) \ge 0$ for all $t\in [0,1]$
			and for all $\alpha$, then $f(x,y) \ge 0;$
			\item there exists a compact subset $K$ of $E$, and $y_0\in K\cap C$ such that $f(x,y_0)<0$ for every $x\in C\setminus K.$
		\end{enumerate}
		Then there exists $\x \in C \cap K$ such that
		$$
		f(\x,y) \ge 0 \quad \text{for all }y \in C.
		$$
	\end{theorem}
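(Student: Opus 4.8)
The plan is to establish the result by the classical Knaster--Kuratowski--Mazurkiewicz (KKM) principle, applied first on finite-dimensional pieces of $C$ and then transferred to the whole space by a compactness argument designed precisely so that hypothesis (iv) can be invoked.

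\emph{Finite-dimensional step.} For every finite set $A\subseteq C$ with $y_0\in A$, I would set $C_A=\mathrm{conv}(A)$, a compact convex subset of $C$ contained in a finite-dimensional subspace of $E$, and consider on $C_A$ the multimap $y\mapsto G_A(y)=\{x\in C_A:f(x,y)\ge 0\}$, $y\in C_A$. By (iii) the function $f(\cdot,y)$ is upper semicontinuous on $C_A$, so each $G_A(y)$ is closed, hence compact. The family $\{G_A(y)\}_{y\in C_A}$ is a KKM family: if some $x=\sum_i\lambda_i y_i\in\mathrm{conv}\{y_1,\dots,y_n\}$ belonged to no $G_A(y_i)$, then $f(x,y_i)<0$ for all $i$, and by (ii) the convex set $\{y\in C:f(x,y)<0\}$ would contain the convex combination $x$, contradicting (i). Ky Fan's form of the KKM principle then yields a point $x_A\in\bigcap_{y\in C_A}G_A(y)$, i.e. $f(x_A,y)\ge 0$ for all $y\in C_A$.

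\emph{Localization and limit point.} Since $y_0\in C_A$, we get $f(x_A,y_0)\ge 0$, so (v) forces $x_A\in K$. Hence $\{x_A\}$, indexed by the finite subsets of $C$ containing $y_0$ (directed by inclusion), is a net in the compact set $K$; let $\x\in K$ be a cluster point. As every $x_A$ lies in the closed set $C$, we have $\x\in K\cap C$. Now fix $y\in C$ and let $[\x,y]=\{(1-t)\x+ty:t\in[0,1]\}\subseteq C$. The finite subsets $A$ of $C$ with $\{\x,y,y_0\}\subseteq A$ form a cofinal subfamily of the index set, along which $\x$ is still a cluster point; extract a subnet $(x_{A_\beta})$ converging to $\x$ with $\{\x,y\}\subseteq A_\beta$ for every $\beta$. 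For each such $\beta$ one has $C_{A_\beta}\supseteq[\x,y]$, whence $f(x_{A_\beta},(1-t)\x+ty)\ge 0$ for all $t\in[0,1]$ and all $\beta$. The filter on $C$ associated with this subnet converges to $\x$, so hypothesis (iv) gives $f(\x,y)\ge 0$. Since $y\in C$ was arbitrary, $\x\in C\cap K$ is the desired solution.

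\emph{Main obstacle.} The only delicate point is this last transfer. Because (iii) provides semicontinuity of $f(\cdot,y)$ only along finite-dimensional subspaces, the set $\{x\in C:f(x,y)\ge 0\}$ need not be closed in $E$, so a single application of the KKM principle on all of $C$ is unavailable — this is exactly why both the finite-dimensional reduction and condition (iv) are needed. The care required is in organizing the net of finite-dimensional solutions so that, for the fixed target $y$, the inequality $f(x_{A_\beta},\cdot)\ge 0$ holds on the \emph{entire} segment $[\x,y]$ simultaneously; this dictates the choice of index sets containing both $\x$ and $y$ and the passage to a convergent subnet before (iv) can be applied.
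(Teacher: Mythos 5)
Your proof is correct. The paper does not actually prove this statement --- it is quoted as a known result from Brezis--Nirenberg--Stampacchia \cite{BreNiSta72} --- and your argument is essentially the classical one from that reference: a KKM argument on the compact convex hulls of finite subsets (closedness of the sets $G_A(y)$ from (iii), the KKM property from (i)--(ii)), the coercivity condition (v) to trap the finite-dimensional solutions $x_A$ in $K$, compactness of $K$ to extract a cluster point $\x\in C\cap K$, and condition (iv) applied along a subnet indexed by finite sets containing both $\x$ and the target $y$ so that $C_{A_\beta}\supseteq[\x,y]$. All the delicate points (relative closedness of the KKM sets only on finite-dimensional sections, cofinality of the restricted index family, and persistence of the cluster point along it) are handled correctly.
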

	
	In the sequel, we will apply Theorem \ref{th:EPexistenceB}  to the case of a normed space $X$ endowed with the weak topology.
	
	Before stating the existence result, let us first recall some useful properties of bifunctions inspired by the analogous
	definitions for maps (see for instance  \cite{ChaWoYao03}).
	\begin{definition}
		A  bifunction  $f: C \times C \to \R$ is said to be
		\begin{enumerate}
			\item \emph{topologically}, or \emph{Brezis pseudomotone} (B-pseudomonotone, for short) on $C$ if for every $\{x_n \}\subseteq  C$ with $x_n \to^w x \in C$ and  such that $\liminf_{n
				\to \infty} f(x_n,x) \ge 0$ it follows that
			$$
			f(x,y) \ge \limsup_{n \to \infty} f(x_n,y) \quad \forall y \in C ;
			$$
			\item of \emph{ type $S_+$} on $C$ if for every $\{x_n \}\subseteq  C$ with $x_n \to^w x \in C$ and such that
			
			$\liminf_{n
				\to \infty} f(x_n,x) \ge 0$ it follows that
			$
			x_n \to ^s x;
			$
			\item \emph{strongly monotone} on $C$ if there exists $k>0$ such that
			$$
			f(x,y) + f(y,x) \le -k \|x-y\|^2 \quad \forall x,y \in C.
			$$
		\end{enumerate}
	\end{definition}

	\begin{remark}\label{remarkUSCPseumonotone}
		
		\begin{enumerate}[i.]
			\item Note that if $f(x,x)=0$ and $f(x,\cdot)$ is sequentially weakly lower semicontinuos for every $x \in C$ (i.e. if $x_n \to ^w \x$, then $f(x,\x) \le \liminf_{n \to \infty}f(x, x_n)$),   then any strongly monotone bifunction is of Type $S_+$. The proof follows that of Proposition \ref{prop:unifmon_vs_S+}.
			
			\item If $f(\cdot,y)$ is  sequentially upper semicontinuous for every $y \in C$ (i.e. if $x_n \to ^s \x$, then $f(\x,y) \ge \limsup_{n \to \infty}f(x_n,y)$) and of type $S_+$, then $f$  is B-pseudomonotone (see Remark 3 in \cite{Bikapi21}).

			\item If $X$ is a normed space equipped with the weak topology and the closed and convex set $C$ is also weakly compact, assumption (v) in Theorem \ref{th:EPexistenceB} trivially
			holds with  $K=C$. Moreover, by Eberlein-\v{S}mulian theorem (see, e.g., Theorem 2.8.6 in \cite{Megginson})
			condition (iv) can be replaced by the following condition stated in terms of sequences:
			\begin{itemize}
				\item whenever $x,y \in C$, $x_n\in C,$ $x_n \rightarrow ^w x$ and $f(x_n, (1-t)x+ty) \ge 0$ for all $t\in [0,1]$ and for all $n$, then $f(x,y) \ge 0.$
			\end{itemize}
			Finally, we point out that the last condition is satisfied under the assumption of Brezis pseudomonotonicity of the bifunction $f$  on $C$ (see Proposition 2 in \cite{Bikapi21}).

		\end{enumerate}
	\end{remark}
	
	Taking into  account the previous remarks, we can state the following existence result for $EP(f,C)$  which will be applied in next section:
	
	\begin{theorem}\label{th:EPexistence2}
		Let $C$ be a nonempty, weakly compact and  convex subset of a  normed space $X$, and $f: C \times C \to \mathbb{R}$ be a bifunction satisfying the following assumptions:
		\begin{enumerate}[(i)]
			\item $f(x,x) =0$ for all $x \in C;$
			\item for every $x \in C$, the set $\{ y \in C : f(x,y) <0 \}$ is convex;
			\item $f(\cdot, y)$ is sequentially upper semicontinuos for all $y \in C$;
			\item $f$ is of type $S_+$ on $C$.
		\end{enumerate}
		Then there exists $\x \in C $ such that
		$$
		f(\x,y) \ge 0 \quad \text{for all }y \in C.
		$$
	\end{theorem}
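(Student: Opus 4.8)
The plan is to deduce Theorem \ref{th:EPexistence2} from Theorem \ref{th:EPexistenceB} by taking the ambient space $E$ to be $X$ equipped with its weak topology, so that convergence of filters on $C$ becomes, after Eberlein--\v{S}mulian, convergence of sequences on $C$. First I would check that $C$, being weakly compact and convex, is closed and convex in $(X,w)$, and that the compactness hypothesis (v) is free: take $K=C$ and pick any $y_0\in C$; then $C\setminus K=\emptyset$, so (v) holds vacuously. This is exactly Remark \ref{remarkUSCPseumonotone}(iii).

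Next, I would verify the five hypotheses (i)--(v) of Theorem \ref{th:EPexistenceB} in turn. Hypothesis (i) there requires $f(x,x)\ge 0$, which is immediate from our (i) (equality $f(x,x)=0$). Hypothesis (ii) is literally our (ii). For hypothesis (iii), one needs $f(\cdot,y)$ upper semicontinuous on the intersection of $C$ with any finite-dimensional subspace $Z$; on such an intersection the weak and strong topologies coincide, so sequential strong upper semicontinuity of $f(\cdot,y)$ — our (iii), together with metrizability of finite-dimensional sets — gives genuine upper semicontinuity there. Hypothesis (v) was already dealt with above.

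The real work is hypothesis (iv), the "filter" condition. By Remark \ref{remarkUSCPseumonotone}(iii) it suffices to check the sequential version: whenever $x,y\in C$, $x_n\in C$, $x_n\to^w x$ and $f(x_n,(1-t)x+ty)\ge 0$ for all $t\in[0,1]$ and all $n$, then $f(x,y)\ge 0$. Here I would combine assumptions (iii) and (iv) of our theorem. Taking $t=0$ gives $f(x_n,x)\ge 0$ for all $n$, hence $\liminf_{n\to\infty} f(x_n,x)\ge 0$; since $f$ is of type $S_+$ on $C$, this forces $x_n\to^s x$. Now by sequential strong upper semicontinuity of $f(\cdot,y)$ (our (iii)), $f(x,y)\ge\limsup_{n\to\infty} f(x_n,y)\ge 0$, the last inequality because each $f(x_n,y)\ge 0$ (the case $t=1$). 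This establishes the sequential form of (iv), and by Remark \ref{remarkUSCPseumonotone}(iii) the filter form follows.

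Having checked all hypotheses, Theorem \ref{th:EPexistenceB} yields $\bar x\in C\cap K=C$ with $f(\bar x,y)\ge 0$ for all $y\in C$, which is the conclusion. The only subtle point — the step I expect to need the most care — is the passage from sequences back to filters in condition (iv), i.e. making sure Remark \ref{remarkUSCPseumonotone}(iii) (and hence the Eberlein--\v{S}mulian reduction, which uses weak compactness of $C$) is invoked correctly; everything else is a routine translation of the sequential hypotheses into the topological language of Theorem \ref{th:EPexistenceB}.
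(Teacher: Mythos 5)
Your proposal is correct and follows essentially the same route the paper intends: the paper gives no written proof but derives Theorem \ref{th:EPexistence2} from Theorem \ref{th:EPexistenceB} applied to $(X,w)$ with $K=C$, using Remark \ref{remarkUSCPseumonotone} to reduce condition (iv) to its sequential form and to note that (iii)+(iv) (via B-pseudomonotonicity) verify it. You simply inline that chain — $t=0$ plus the $S_+$ property forcing $x_n\to^s x$, then sequential upper semicontinuity and $t=1$ giving $f(\bar x,y)\ge 0$ — which matches the argument of Remark \ref{remarkUSCPseumonotone}(ii)--(iii) exactly.
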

	Finally note that in case the space $X$ is reflexive, the theorem above holds if $C$ is nonempty, closed, convex,  and bounded.

	\section{Projected solutions for quasi equilibrium problems via an iterative procedure}
	In this section, we aim to investigate problem \eqref{QEP} by the following algorithmic approach that makes it possible to relax some of the requirements of the theorems in \cite{Milasi22,AusSulVet2016}.
	
	\bigskip
	
	\begin{tabular}{l}
		\hline
		\textbf{Algorithm 1:} Projected Solution Procedure\\
		\hline
		(1) For $i=0$ initialize $x_i\in C$. \\
		(2) Solve $EP(f,\Phi(x_i))$. \\
		(3) Choose a point $z_i \in S(x_i)$. \\
		(4) Set $x_{i+1} \in  P_C(z_i)$.\\
		(5) If $x_{i+1} = x_{i}$ stop. Otherwise increase $i$ by 1 and loop to step 2.\\
		\hline
	\end{tabular}
	
	\bigskip
	
	We recall that a  sequence $\{x_n\}$ in $X$ is \emph{asymptotically regular} (see, for instance, \cite{BorZhu}) if
	\begin{equation}\label{eq:asymptreg}
	\lim_{n\to +\infty} \|x_n-x_{n+1}\| =0.
	\end{equation}
	It is easy to verify that, if an asymptotically regular sequence admits a weakly convergent subsequence $\{x_{n_k}\}$ to $\overline{x}$, then  also $\{x_{n_k+1}\}$  weakly converges to the same $\overline{x}$.

	\begin{theorem}\label{FirstTheoremBanach}
		Let $C$ be a  nonempty, convex, and weakly compact subset of a normed space $X$ endowed with the weak topology.  Let $\Phi: C \rightrightarrows X$ be such that
		\begin{enumerate}[(i)]
			\item $\Phi(x)$ is nonempty and convex for every $x \in C$;
			\item  $\Phi(C)$ is relatively weakly compact;
			\item $\Phi$ is  sequentially weakly lower semicontinuous for every $x \in C$;
			\item $\Phi$ is weakly closed.
		\end{enumerate}
		Let $f:X \times X \to \R$ be a bifunction such that
		\begin{enumerate}[(a)]
			\item $f(x,x)=0$, for every $x\in \Phi(C)$;
			\item for every $x \in \Phi(C)$, the set $\{ y \in \Phi(C) : f(x,y) <0 \}$ is convex;
			\item $f(\cdot,y)$ is sequentially upper semicontinuous  for all $y \in \Phi(C)$;
			\item $f$ is of type $S_+$ on $\Phi(C)$;
			\item $|f(x,y)-f(x,z)| \le h(x) \|y-z\|$, for all $x,y,z \in \Phi(C)$, where $h: X \to \R_+$ is  bounded on bounded sets.
		\end{enumerate}
		If the sequence $\{x_n\}$ generated by Algorithm 1 is asymptotically regular, then it admits a weak limit point which is a projected solution of $QEP(f,\Phi)$.
		
	\end{theorem}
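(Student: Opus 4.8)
The plan is to treat Algorithm 1 itself as the proof device: show it is well defined, extract a weak limit point of the generated sequence, and verify that this point satisfies \eqref{QEP}.

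First I would check well-posedness. For each $i$, $\Phi(x_i)$ is nonempty and convex by (i), and it is weakly closed, being the slice $\{z:(x_i,z)\in\mathrm{gph}(\Phi)\}$ of the weakly closed graph of $\Phi$; as it lies in the relatively weakly compact set $\Phi(C)$, it is weakly compact. Restricting (a)--(d) to $\Phi(x_i)\subseteq\Phi(C)$ gives exactly the hypotheses of Theorem~\ref{th:EPexistence2} applied with $\Phi(x_i)$ in place of $C$, so $S(x_i)\neq\emptyset$ and step (3) makes sense; and since $C$ is weakly compact it is approximatively $w$-compact, so $P_C$ has nonempty values by Theorem~\ref{Theor:Deutsch}(1) and step (4) makes sense. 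If the algorithm halts, then $x_{i+1}=x_i\in P_C(z_i)$ with $z_i\in S(x_i)$, so $x_i$ is a projected solution by \eqref{QEP} and there is nothing more to prove; so assume the sequence is infinite.

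Next I would extract limit points. By weak compactness of $C$ there is a subsequence $x_{n_k}\to^w\bar x\in C$, and since $\{z_{n_k}\}\subseteq\Phi(C)$ is relatively weakly compact, after a further extraction $z_{n_k}\to^w\bar z$; weak closedness of the graph of $\Phi$ then gives $\bar z\in\Phi(\bar x)$. I claim that $z_0:=\bar z$ is the desired solution, i.e.\ $\bar z\in\mathrm{Sol}(f,\Phi(\bar x))$ and $\bar x\in P_C(\bar z)$. The key step, which I expect to be the main obstacle, is to upgrade $z_{n_k}\to^w\bar z$ to strong convergence along a subsequence. For this, apply sequential weak lower semicontinuity of $\Phi$ (assumption (iii)) to $x_{n_k}\to^w\bar x$ and $\bar z\in\Phi(\bar x)$ to obtain $w_l\in\Phi(x_{n_{k_l}})$ with $w_l\to^s\bar z$; then $f(z_{n_{k_l}},w_l)\ge 0$ since $z_{n_{k_l}}\in\mathrm{Sol}(f,\Phi(x_{n_{k_l}}))$, and the Lipschitz-type bound (e), together with the boundedness of $\{z_{n_{k_l}}\}\subseteq\Phi(C)$ on which $h$ is bounded, gives $f(z_{n_{k_l}},\bar z)\ge -h(z_{n_{k_l}})\|w_l-\bar z\|\to 0$, hence $\liminf_l f(z_{n_{k_l}},\bar z)\ge 0$. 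Since $f$ is of type $S_+$ on $\Phi(C)$ (assumption (d)), this forces $z_{n_{k_l}}\to^s\bar z$; I would then pass to this subsequence throughout, so that both $x_{n_k}\to^w\bar x$ and $z_{n_k}\to^s\bar z$.

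Finally I would close the argument. For the solution property, fix $y\in\Phi(\bar x)$, apply (iii) once more to get $y_r\in\Phi(x_{n_{k_r}})$ with $y_r\to^s y$, so $0\le f(z_{n_{k_r}},y_r)$ and by (e) $\liminf_r f(z_{n_{k_r}},y)\ge 0$; then sequential upper semicontinuity of $f(\cdot,y)$ (assumption (c)) and $z_{n_{k_r}}\to^s\bar z$ yield $f(\bar z,y)\ge\limsup_r f(z_{n_{k_r}},y)\ge 0$, and since $y$ is arbitrary $\bar z\in\mathrm{Sol}(f,\Phi(\bar x))$. For the projection property, note $x_{n_k+1}\in P_C(z_{n_k})$ and, by asymptotic regularity \eqref{eq:asymptreg}, $x_{n_k+1}\to^w\bar x$; since $C$ is boundedly (hence approximatively) $w$-compact, $P_C$ is norm-weak u.s.c.\ and $P_C(\bar z)$ is weakly compact by Theorem~\ref{Theor:Deutsch}. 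If $\bar x\notin P_C(\bar z)$, I would separate the weakly compact set $P_C(\bar z)$ from $\bar x$ by disjoint weakly open sets $V\supseteq P_C(\bar z)$ and $W\ni\bar x$ (the weak topology is Hausdorff): $z_{n_k}\to^s\bar z$ forces $x_{n_k+1}\in P_C(z_{n_k})\subseteq V$ eventually, while $x_{n_k+1}\to^w\bar x$ forces $x_{n_k+1}\in W$ eventually, a contradiction. Hence $\bar x\in P_C(\bar z)$, so by \eqref{QEP} the weak limit point $\bar x$ is a projected solution of $QEP(f,\Phi)$. Besides the strong-convergence step, the only care needed is the nested-subsequence bookkeeping, to ensure that the strong convergence obtained there is not lost when re-extracting for each test point $y$.
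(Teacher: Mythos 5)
Your proposal is correct and follows essentially the same route as the paper's own proof: well-posedness via Theorem~\ref{th:EPexistence2}, extraction of weak limits $\bar x$ and $\bar z$, the upgrade of $z_{n_k}\to^w\bar z$ to strong convergence via (iii), (e) and the $S_+$ property, the verification $\bar z\in S(\bar x)$ via (c), and the contradiction argument for $\bar x\in P_C(\bar z)$ using the norm-weak upper semicontinuity of $P_C$. The only cosmetic difference is that you separate $\bar x$ from the weakly compact set $P_C(\bar z)$ by disjoint weakly open sets using Hausdorffness, whereas the paper writes out the same separation explicitly with a Hahn--Banach functional; both are valid and equivalent here.
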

	\begin{proof}
		First, we  observe that $\Phi$ has weakly closed values, because of (iv).
		On the other hand, for each $x \in C$, $\Phi(x)\subseteq \overline{\Phi(C)}$. We can therefore conclude that $\Phi(x)$ is weakly compact relying on assumption (ii). It is obvious that for each $x \in C$, $\Phi(x)$ satisfies the conditions on the set $C$ in Theorem \ref{th:EPexistence2}. Then by applying Theorem \ref{th:EPexistence2}, we get the existence of solutions for $EP(f,\Phi(x))$ for every $x \in C$, i.e. $S(x) \neq \emptyset$ for each $x \in C$.
		
		The steps (1) and (4) of Algorithm 1 allow us to observe that the sequence $\{x_n\} \subseteq C$ and since $C$ is weakly compact, without loss of generality, we can assume $x_n \to^{w} \x$. The step (3) of the algorithm implies that $z_n \in S(x_n) \subseteq \Phi(x_n)\subseteq \overline{\Phi(C)}$. Using (ii), $\{z_n\}$ is a sequence in a weakly compact set. Consequently, again without loss of generality, we can assume $z_n \to ^{w} \z$. And from (iv), it follows that $\z \in \Phi(\x)$.
		
		Under (iii),  since $x_n \to^{w} \x$ and $\z \in \Phi(\x)$,  there exists $u_n \in \Phi(x_n)$ such  that $u_n\to^s \z$ (again without loss of generality we don't pass to subsequences). From  $z_n \in S(x_n)$, we have $f(z_n, u_n) \ge 0$ and from (e):
		$$
		f(z_n,\z) + h(z_n) \|u_n-\z\| \ge f(z_n,\z) + f(z_n,u_n)  - f(z_n,\z) = f(z_n, u_n) \ge 0.
		$$
		Therefore, since $\{z_n\}$ is bounded, $\liminf_{n \to \infty} f(z_n,\z) \ge 0$ and by assumption (d), we get $z_n \to^s \z$.
		
		Now we show that $\z \in S(\x)$. Take any  $y \in \Phi(\x)$. Again by (iii), there exists $v_n \in \Phi(x_n)$ such that $v_n\to^s y$. With similar steps as above, we obtain
		$$
		f(z_n,y) + h(z_n) \|v_n-y\| \ge f(z_n,y) + f(z_n,v_n)  - f(z_n,y) = f(z_n, v_n) \ge 0,
		$$
		and thus
		$\limsup_{n \to \infty} f(z_n,y) \ge 0$. By (c), $f(\z,y) \ge 0$, and the assertion follows by the arbitrary choice of $y$ in $\Phi(\x)$.
		
		Finally we  show  that $\x \in P_C(\z)$.  By contradiction, let us suppose that $\x \notin P_C(\z)$. Since $P_C(\z)$ is a convex  and
		weakly compact set (see Theorem \ref{Theor:Deutsch}), we apply the Hahn-Banach Theorem to separate $\x$ and $P_C(\z)$. More precisely, there exist two real numbers $\alpha$ and $\beta$, $\alpha <\beta$ and a non-null linear functional $\x^*\in X^*$ such that
		\begin{equation}
		\langle \x^*,\x \rangle<\alpha <\beta < \langle \x^*,x\rangle,
		\end{equation}
		for every $x \in P_C(\z)$. Now, let
		$$
		\eta = \min_{x\in P_C(\z)}\langle \x^*,x\rangle.
		$$
		The number $\eta$ exists since $P_C(\z)$ is a weakly compact set. It holds $\beta <\eta$, hence we consider the weakly open set
		$$
		V=\left\lbrace v\in X:\langle \x^*,v \rangle>\beta+\frac{\eta-\beta}{2}  = \frac {\beta + \eta} 2 \right\rbrace.
		$$
		It is clear that $V \supseteq P_C(\z)$. Since, by Theorem \ref{Theor:Deutsch}, the set valued map $P_C$ is norm-weakly u.s.c., and there exists a suitable subsequence $\{z_{n_k}\} $ of the original sequence $\{z_n\}$ such  that $z_{n_k} \to ^s \z$, there exists $n_0 \in \N$ such that $V \supseteq P_C(z_{n_k})$ for every $n\geq n_0$.
		Now, pick $x_{n_k+1} \in P_C(z_{n_k})$, hence
		$$
		\langle \x^*, x_{n_k+1}\rangle >\frac{\beta+\eta}{2}> \beta
		$$
		for every $n\geq n_0$. On the other hand, since $x_{n_k} \to^w \x$, and $\{x_n\}$ is asymptotic regular,
		$$
		\lim_{n \to \infty}\langle \x^*, x_{n_k+1}\rangle=\langle \x^*, \x\rangle <\alpha <\beta,
		$$
		which is a contradiction.
		
		To sum up,  given the presence of $\z \in S(\x)$ and $\x \in P_C(\z)$, $\x$ is a projected solution.

	\end{proof}
	
	\begin{remark}
		If $C$  is a Chebyshev set, the metric projection $P_C$ is single valued and norm-weakly continuous and we can avoid the separation argument of the last part of the proof.  Indeed,  from $z_{n_k} \to^s\z$, we get $P_C(z_{n_k}) \to ^{w} P_C(\z)$. But $P_C(z_{n_k})=x_{n_k+1} \to ^{w} \x$ and thus $\x=P_C(\z)$. Being $\z \in S(\x)$, this proves that $\x$ is a projected solution.
	\end{remark}

	\begin{remark}
		The assumption about asymptotic regularity of $\{x_n\}$ in Theorem \ref{FirstTheoremBanach} can be weakened by assuming an asymptotic regularity condition with respect to weak convergence; namely, we may require that
		$$
		\lim_{n \to \infty}\left|\langle x^*,x_n-x_{n+1}\rangle \right| =0
		$$
		for every $x^*\in X^*$.
	\end{remark}

	\begin{remark}
		Taking into account Theorem 16.12 in \cite{CharalAliprantis}, note that  assumptions (ii) and (iv) in the theorem above entail that the set valued map $\Phi$ is norm-weakly upper semicontinuous.
	\end{remark}

	We now discuss some particular instances of Theorem \ref{FirstTheoremBanach}.

	Since every bounded and boundedly $w$-compact subset of $X$ is weakly compact,  we get the following.
	
	\begin{corollary}\label{SecondTheoremBanach}
		Let $C$ be a  nonempty, convex, bounded, and boundedly w-compact subset of a normed space $X$. Let $f:X \times X \to \R$ be a bifunction and $\Phi: C \rightrightarrows X$ be a set-valued map with nonempty and convex values while $\overline{\Phi(C)}$ is bounded and boundedly w-compact. Suppose that the conditions (iii)-(iv) and (a)-(e) of Theorem \ref{FirstTheoremBanach} are satisfied.
		If the sequence $\{x_n\}$ generated by Algorithm 1 is asymptotically regular, then it admits a weak limit point which is a projected solution of $QEP(f,\Phi)$.
	\end{corollary}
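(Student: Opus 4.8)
The plan is to obtain the statement as an immediate consequence of Theorem \ref{FirstTheoremBanach}, applied with $X$ viewed as a normed space endowed with its weak topology; the only thing to check is that the hypotheses assumed here imply those of that theorem.

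First I would handle the geometric assumptions. Since $C$ is bounded, every sequence in $C$ is bounded, so bounded $w$-compactness of $C$ says exactly that every sequence in $C$ has a subsequence converging weakly to a point of $C$; hence $C$ is sequentially weakly compact, and therefore weakly compact by the Eberlein-\v{S}mulian theorem. This is the standing assumption on $C$ in Theorem \ref{FirstTheoremBanach}. The same reasoning applied to $\overline{\Phi(C)}$, which is assumed bounded and boundedly $w$-compact, shows that $\overline{\Phi(C)}$ is weakly compact, that is, $\Phi(C)$ is relatively weakly compact; this is condition (ii). Condition (i) is just the assumption that $\Phi$ has nonempty convex values, while conditions (iii)-(iv) on $\Phi$ and (a)-(e) on $f$ are assumed verbatim.

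With all the hypotheses of Theorem \ref{FirstTheoremBanach} in place, and $\{x_n\}$ asymptotically regular by assumption, the conclusion that $\{x_n\}$ has a weak limit point which is a projected solution of $QEP(f,\Phi)$ is exactly the conclusion of that theorem. There is essentially no obstacle here: the only point worth writing out explicitly is the equivalence, in the presence of boundedness, between bounded $w$-compactness and (sequential, hence by Eberlein-\v{S}mulian, plain) weak compactness, which is what lets us feed the present, more checkable hypotheses into Theorem \ref{FirstTheoremBanach}.
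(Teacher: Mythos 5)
Your proposal is correct and follows exactly the paper's route: the paper derives this corollary in one line from the observation that every bounded and boundedly $w$-compact set is weakly compact, which is precisely the reduction you carry out (you merely make the Eberlein--\v{S}mulian step explicit) before invoking Theorem \ref{FirstTheoremBanach}.
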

	
	In a reflexive normed space the assumption about relative weak compactness of $\Phi(C)$ can be replaced by a boundedness assumption.
	
	\begin{theorem}\label{maintheorem}
		Let $C$ be a nonempty, convex, bounded, and closed subset of a reflexive normed space $X$. Let $f:X \times X \to \R$ be a bifunction and $\Phi: C \rightrightarrows X$ be a set-valued map with nonempty, and convex values with  $\Phi(C)$  bounded. Suppose that the conditions (iii)-(iv) and (a)-(e) of Theorem \ref{FirstTheoremBanach} are satisfied.
		If the sequence $\{x_n\}$ generated by Algorithm 1 is asymptotically regular, then it admits a weak limit point which is a projected solution of $QEP(f,\Phi)$.
	\end{theorem}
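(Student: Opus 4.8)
The plan is to deduce this statement directly from Theorem \ref{FirstTheoremBanach} by checking that, in a reflexive normed space, the present hypotheses imply all the hypotheses of that theorem. Recall first that a reflexive normed space is automatically complete, hence a reflexive Banach space, so the classical duality machinery is at our disposal.

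The first step is to verify that $C$ is weakly compact. Since $C$ is convex and norm-closed, it is weakly closed; since it is bounded, it is contained in $\rho B_X$ for some $\rho>0$, and $\rho B_X$ is weakly compact because $X$ is reflexive. A weakly closed subset of a weakly compact set is weakly compact, so $C$ is weakly compact, which is precisely the requirement on $C$ in Theorem \ref{FirstTheoremBanach}.

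The second step is to replace the boundedness of $\Phi(C)$ by relative weak compactness, i.e., to check assumption (ii) of Theorem \ref{FirstTheoremBanach}. This is again immediate from reflexivity: $\Phi(C)$ is bounded, hence contained in some ball $\rho' B_X$, which is weakly compact, so the weak closure of $\Phi(C)$ is a weakly closed subset of a weakly compact set and is therefore weakly compact; thus $\Phi(C)$ is relatively weakly compact. (In fact, for subsets of a reflexive space boundedness and relative weak compactness are equivalent, so this is the natural reformulation of assumption (ii) in the present setting.) All the remaining hypotheses --- nonemptiness and convexity of the values of $\Phi$, items (iii)--(iv) on $\Phi$, and items (a)--(e) on $f$ --- are assumed verbatim, and the asymptotic regularity of $\{x_n\}$ is assumed as well. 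Hence Theorem \ref{FirstTheoremBanach} applies to $C$, $\Phi$ and $f$, and its conclusion gives that $\{x_n\}$ has a weak limit point which is a projected solution of $QEP(f,\Phi)$.

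I expect no serious obstacle: the argument is a one-line specialization once the two reflexivity facts above are recorded, and it needs no estimate beyond those already in the proof of Theorem \ref{FirstTheoremBanach}. The only point worth stressing is that reflexivity is used twice --- once to make the closed, bounded, convex set $C$ weakly compact, and once to make the merely bounded set $\Phi(C)$ relatively weakly compact --- and it is precisely this double use that allows us to drop the a priori relative weak compactness assumption on $\Phi(C)$. Alternatively, one could invoke Corollary \ref{SecondTheoremBanach} after noting that in a reflexive space a closed, bounded, convex set is boundedly $w$-compact; but routing the proof through Theorem \ref{FirstTheoremBanach} avoids having to check whether the norm closure $\overline{\Phi(C)}$ is itself boundedly $w$-compact.
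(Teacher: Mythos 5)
Your proposal is correct and follows essentially the same route as the paper: both use reflexivity to turn the closed bounded convex set $C$ into a weakly compact set and the bounded set $\Phi(C)$ into a relatively weakly compact one, and then fall back on the argument of Theorem \ref{FirstTheoremBanach} (the paper re-runs its opening steps rather than citing it as a black box, but the content is identical). Your formulation as a clean verification of hypotheses is fine, and the observation that a reflexive normed space is automatically a Banach space is correct and harmless.
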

	\begin{proof}
		Given that every convex, closed, and bounded set in a reflexive space is weakly compact, it is simple to deduce that $\Phi(x)$ is weakly compact for every $x \in C$. Therefore, by using Theorem \ref{th:EPexistence2}, we have $S(x) \neq \emptyset$.
		
		Due to the structure of Algorithm 1 and the boundedness of the set $C$, $\{x_n\}$ has a weakly convergent subsequence since it is a bounded sequence in a reflexive space. Without losing generality, we consider $x_n \to^{w} \x$.	
		
		Furthermore, step 3 of Algorithm 1 and the fact that  $\Phi(C)$ is bounded, indicate that $\{z_n\} \subseteq S(x_n) \subseteq \Phi(x_n) \subseteq \Phi(C) $ is bounded, too and, again without loss of generality, we can set $z_n \to ^{w} \z$.
		
		We skip the remaining parts of the proof since they proceed similarly to the proof of Theorem \ref{FirstTheoremBanach}.
		
	\end{proof}

	When  $X$ is a finite dimensional space, assumption (d) is not necessary to show the existence of projected solutions because the strong and weak convergence coincide. Consequently, we can draw the following result  to  compare with Corollary 3 in \cite{CoZu2019}.
	
	\begin{corollary}\label{cor:finitedim}
		
		Let $C$ be a nonempty, convex, bounded, and closed subset of $\R^n$. Let $\Phi: C \rightrightarrows \R^n$ be a lower semicontinuous and closed map with nonempty and convex values while $\Phi(C)$ is bounded. Suppose that $f:\R^n \times \R^n \to \R$ is a bifunction satisfying  (a)-(c) and (e) of Theorem \ref{FirstTheoremBanach}.
		If the sequence $\{x_n\}$ generated by Algorithm 1 is asymptotically regular, then it admits a limit point which is a projected solution of $QEP(f,\Phi)$.
	\end{corollary}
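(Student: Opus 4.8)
The strategy is to reduce the statement directly to Theorem \ref{FirstTheoremBanach}, exploiting the fact that in $\R^n$ the weak and the norm topologies coincide, so that the (apparently weaker) hypotheses imposed here are in fact equivalent to those of Theorem \ref{FirstTheoremBanach}, and so that the missing assumption (d) comes for free.

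First I would translate the hypotheses on $C$ and on $\Phi$ into the language of Theorem \ref{FirstTheoremBanach}. Since in $\R^n$ bounded sets are relatively compact, and closed convex bounded sets are (weakly) compact, the set $C$ is convex and weakly compact, and $\overline{\Phi(C)}$ is compact, hence $\Phi(C)$ is relatively weakly compact; this gives (i)--(ii). As noted right after Definition 1, in a finite dimensional normed space sequential lower semicontinuity and sequential weak lower semicontinuity are equivalent, so the lower semicontinuity of $\Phi$ yields (iii); and closedness of $\mathrm{gph}(\Phi)$ is the same as weak closedness, so (iv) holds. Next I would observe that assumption (d) of Theorem \ref{FirstTheoremBanach} --- that $f$ be of type $S_+$ on $\Phi(C)$ --- is automatically satisfied in $\R^n$: for any $\{x_n\}\subseteq \Phi(C)$ with $x_n\to^w x$, finite dimensionality already forces $x_n\to^s x$, so the defining implication holds trivially, regardless of whether $\liminf_{n} f(x_n,x)\ge 0$. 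Hence, together with the assumed (a)--(c) and (e), all of the bifunction hypotheses (a)--(e) of Theorem \ref{FirstTheoremBanach} are in force.

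Finally, applying Theorem \ref{FirstTheoremBanach} to the asymptotically regular sequence $\{x_n\}$ generated by Algorithm 1 produces a weak limit point $\x$ of $\{x_n\}$ which is a projected solution of $QEP(f,\Phi)$; since weak convergence in $\R^n$ is norm convergence, $\x$ is a genuine limit point, and the corollary follows. (Alternatively, since $\R^n$ is reflexive one could invoke Theorem \ref{maintheorem} in place of Theorem \ref{FirstTheoremBanach}.) I do not expect any real obstacle: the only two points needing a line of justification are the equivalence of the two semicontinuity notions and the automatic validity of the $S_+$ condition, both of which are immediate consequences of the compactness of bounded subsets of $\R^n$.
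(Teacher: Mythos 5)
Your proposal is correct and follows exactly the route the paper intends: the remark preceding the corollary reduces it to Theorem \ref{FirstTheoremBanach} by noting that weak and strong convergence coincide in $\R^n$, so that hypothesis (d) (type $S_+$) holds vacuously and the remaining hypotheses translate verbatim. Your explicit verification that (i)--(iv) follow from boundedness/closedness in finite dimensions is exactly the bookkeeping the paper leaves implicit.
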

	
	We dedicate the last part of this section discussing the assumption requiring that the sequence generated by Algorithm 1 satisfies condition \eqref{eq:asymptreg}.
	
	First, we note that if the assumption about asymptotic regularity is not fulfilled, then the sequence $\{x_n\}$ generated by Algorithm 1 may have limit points which are not  projected solutions, as the following simple example shows.
	
	\begin{example}
		Let us consider $C=\{x=(x^1,x^2)\in \R^2:-1 \leq x^1 \leq 1, x^2=0\}$, $f(x,y)= y^2-x^2$ (where $x=(x^1,x^2),\, y=(y^1,y^2) \in \R^2$) and
		$$
		\Phi(x)=\Phi((x^1,x^2)) = \{(y^1,y^2)\in \R^2:y^1=-x^1, \, 1 \leq y^2 \leq 2\}.
		$$
		All the assumptions about $C,\Phi$ and $f$ in Corollary \ref{cor:finitedim} are satisfied. Simple considerations show that the solution for the quasi equilibrium problem is unique for every $x\in C$ and the solution map $S$ is defined by $S(x)=(-x^1,1)$. Moreover, we check at once that the only projected solution of our problem is the point $(0,0)$.

		Now, let us consider a point $x_0=(x_0^1,0) \in C $ with $x_0^1\neq 0$.  We have $\Phi(x_0)=\{(y^1,y^2)\in \R^2:y^1=-x_0^1,\, 1 \leq y^2 \leq 2\}$. Hence, we obtain $z_0=S(x_0)=(-x_0^1,1)$ and  so $P_C(z_0)=x_1 = (-x_0^1,0)$. Now $\Phi(x_1)=\{(y^1,y^2)\in \R^2:y^1=x_0^1,\, 1 \leq y^2 \leq 2\}$. Therefore, we get  $z_1=S(x_1)=(x_0^1,1)$ and  so $P_C(z_1)=x_2 = (x_0^1,0)$.
		By iterating the procedure of Algorithm 1 we obtain the sequences
		$$
		x_n= \begin{cases}
		x_0 & \text{ if } n \text{ is even } \\
		-x_0 & \text{ if } n \text{ is odd;}
		\end{cases} \quad \quad
		z_n= \begin{cases}
		(-x_0^1,1) & \text{ if } n \text{ is even } \\
		(x_0^1,1) & \text{ if } n \text{ is odd.}
		\end{cases}
		$$
		It is easy to check that the sequence $\{x_n\}$ is not asymptotically regular and it admits two convergent subsequences: $x_{2n} \to x_0$ and $x_{2n+1} \to -x_0$. Moreover, neither $x_0$ or $-x_0$ are projected solutions.

		Finally, we point out that in this example, Algorithm 1 does not converge to a projected solution for any choice of the starting point $x_0\neq0$.
		
	\end{example}

	The previous example shows that there are problems where, for almost all starting points, Algorithm 1 generates sequences that are not asymptotically regular, even if the problem admits projected solutions.
	Our aim is therefore to find sufficient conditions that give the asymptotic regularity of the sequence $\{x_n\}$  generated by Algorithm 1, whatever we choose a  starting point in $C$.
	
	We recall that a map $T: C \subseteq X \rightrightarrows X$ with $C \subseteq \rm{dom}(T)$,  is said to be \emph{L-Lipschitz} on $C$ if given $x,y \in C$, for each $x' \in T(x)$, there exists $y' \in T(y)$ such that
	$$
	\| x'-y' \|\le L \, \| x-y \|
	$$
	or equivalently,
	$$
	T(x) \subseteq T(y) + L \, \|x-y\|B_X, \quad \forall x,y \in C.
	$$
	In particular, the map is said to be \emph{non-expansive} if $L=1$ and a \emph{contraction} if $L<1$.
	
	An example of non-expansive map (single valued)} is the metric projection $P_C$ on a closed and convex set in a Hilbert space. On the other hand the metric projection map $P_C$ is no longer non-expansive when we consider a general normed space (see \cite{Alber} and the reference therein).

If we assume further assumptions on the maps $P_C$ and $S$,  we can prove that, for each choice of the starting point $x_0 \in C$, it is possible to make the choices in steps (3) and (4) of Algorithm 1 such that the sequence $\{ x_n\}$ generated is asymptotically regular.

\begin{proposition} \label{Proposition contraction} Let $X$ be a normed space. Let us suppose that the projection $P_C$ is nonexpansive and the solution map $S$ is a contraction.  Then, for every choice of $x_0\in C$ there exists a sequence $\{x_n\}$  generated by Algorithm 1 which is asymptotically regular. Moreover, if $X$ is a Banach space, $\{x_n\}$ strongly converges to a projected solution.
\end{proposition}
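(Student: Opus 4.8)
The plan is to recognize the composite multifunction $\mathcal{T}=P_C\circ S:C\rightrightarrows C$ as a set-valued contraction and to run on it the Nadler-type iteration for set-valued contractions, which is exactly Algorithm 1 once the choices at steps (3) and (4) are made with care. First I would check that $\mathcal{T}$ is $L$-Lipschitz with the same constant $L<1$ as $S$: fix $x,y\in C$ and $p\in\mathcal{T}(x)$, so that $p\in P_C(z)$ for some $z\in S(x)$; since $S$ is an $L$-contraction there is $w\in S(y)$ with $\|z-w\|\le L\|x-y\|$, and since $P_C$ is nonexpansive there is $q\in P_C(w)\subseteq\mathcal{T}(y)$ with $\|p-q\|\le\|z-w\|\le L\|x-y\|$. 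Hence $\mathcal{T}(x)\subseteq\mathcal{T}(y)+L\|x-y\|B_X$ for all $x,y\in C$, i.e. $\mathcal{T}$ is an $L$-contraction.

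Next I would build the sequence. Starting from an arbitrary $x_0\in C$, pick $z_0\in S(x_0)$ and $x_1\in P_C(z_0)$ (both nonempty: $C\subseteq\mathrm{dom}(S)$ is part of $S$ being a contraction on $C$, and $P_C$ is nonempty-valued on $S(C)$ since it is nonexpansive there). Inductively, given $x_n\in P_C(z_{n-1})$ with $z_{n-1}\in S(x_{n-1})$, apply the contraction property of $S$ to the pair $(x_{n-1},x_n)$ and to the point $z_{n-1}$ to obtain $z_n\in S(x_n)$ with $\|z_{n-1}-z_n\|\le L\|x_{n-1}-x_n\|$, and then apply the nonexpansiveness of $P_C$ to $(z_{n-1},z_n)$ and to $x_n$ to obtain $x_{n+1}\in P_C(z_n)$ with $\|x_n-x_{n+1}\|\le\|z_{n-1}-z_n\|\le L\|x_{n-1}-x_n\|$. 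These are admissible choices at steps (3)--(4) of Algorithm 1, so $\{x_n\}$ is a sequence generated by the algorithm. (Should the stopping rule (5) ever fire, i.e. $x_{n+1}=x_n$, then $x_n=x_{n+1}\in P_C(z_n)$ with $z_n\in S(x_n)$, so $x_n$ is already a projected solution and the eventually constant sequence is trivially asymptotically regular.) Iterating the estimate gives $\|x_n-x_{n+1}\|\le L^n\|x_0-x_1\|$ and $\|z_n-z_{n+1}\|\le L^{n+1}\|x_0-x_1\|$; since $0\le L<1$, both right-hand sides vanish, so $\{x_n\}$ is asymptotically regular.

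For the last assertion, suppose $X$ is a Banach space. For $m>n$ the geometric bound gives $\|x_n-x_m\|\le\frac{L^n}{1-L}\|x_0-x_1\|\to 0$, so $\{x_n\}$ is Cauchy and converges strongly to some $\x$; likewise $z_n\to^s\z$. Since $C$ is closed, $\x\in C$. Letting $n\to\infty$ in $\|z_n-x_{n+1}\|=d(z_n,C)$, by continuity of the norm and of $d(\cdot,C)$ we get $\|\z-\x\|=d(\z,C)$, hence $\x\in P_C(\z)$. Moreover the contraction property of $S$ gives $d(z_n,S(\x))\le L\|x_n-\x\|\to 0$, whence $d(\z,S(\x))=0$; since $S(\x)=\mathrm{Sol}(f,\Phi(\x))$ is closed (equivalently, $S$ has closed graph — a property at hand in our framework, e.g. when $\Phi(\x)$ is closed and $f(\cdot,y)$ is upper semicontinuous), this yields $\z\in S(\x)$. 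Therefore $\x\in P_C(\z)$ with $\z\in S(\x)$, i.e. $\x$ is a projected solution of $QEP(f,\Phi)$.

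I expect the delicate point to be this final passage to the limit: completeness readily produces a strong limit, but certifying that the limit is a genuine projected solution requires closedness of the graph (equivalently, of the values) of the solution map $S$; by contrast, identifying $\mathcal{T}$ as a contraction and extracting from it the geometric decay, the asymptotic regularity, and the Cauchy property is routine bookkeeping.
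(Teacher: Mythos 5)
Your proposal is correct, and the construction of the asymptotically regular sequence is essentially the paper's: you interleave the nonexpansive choice in $P_C$ with the $L$-contraction choice in $S$ to get $\|x_n-x_{n+1}\|\le L^n\|x_0-x_1\|$ (the paper anchors the same recursion at $\|x_0-z_0\|$ instead, which is immaterial), and the Cauchy estimate for the Banach case is identical. Your opening observation that $\mathcal{T}=P_C\circ S$ is itself an $L$-contraction is not needed for the iteration and in the paper appears only as a remark after the proposition, where it is combined with Nadler's fixed point theorem.

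Where you genuinely diverge is in certifying that the strong limit is a projected solution. The paper does not argue this directly: it invokes Theorem \ref{FirstTheoremBanach} to produce a weakly convergent subsequence whose limit is a projected solution, and then uses the Cauchy property to upgrade to strong convergence of the whole sequence to that point; this implicitly carries along all the standing hypotheses of that theorem (weak compactness of $C$, relative weak compactness of $\Phi(C)$, sequential weak lower semicontinuity of $\Phi$, the type $S_+$ condition on $f$, etc.). You instead pass to the limit by hand: $\|z_n-x_{n+1}\|=d(z_n,C)$ gives $\overline{x}\in P_C(\overline{z})$ by continuity of $d(\cdot,C)$ and closedness of $C$, and $d(z_n,S(\overline{x}))\le L\|x_n-\overline{x}\|\to 0$ together with closedness of $S(\overline{x})$ gives $\overline{z}\in S(\overline{x})$. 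This is more elementary and needs strictly less — only nonemptiness of $S$ along the iteration, closedness of $C$, closedness of $S(\overline{x})$ (which, as you note, follows from closed values of $\Phi$ and upper semicontinuity of $f(\cdot,y)$), and completeness — so in the Banach case your argument is actually more self-contained and slightly more general than the paper's. The one thing the paper's route buys that yours does not is a conclusion in the non-complete normed setting, where one cannot extract a strong limit and must fall back on the weak-compactness machinery of Theorem \ref{FirstTheoremBanach}; but since the first assertion of the proposition claims only asymptotic regularity there, your proof covers everything that is stated.
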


\begin{proof}
	Let us pick a point $x_0\in C$ and let $z_0 \in S(x_0)$. According to the nonexpansivity of $P_C$, chose $x_1 \in P_C(z_0)$ such that
	$$
	\|x_0-x_1\| \le  \|x_0-z_0\|.
	$$
	Now, we can choose  $z_1 \in S(x_1)$  such that
	$$
	\|z_0-z_1\| \le L \|x_0-x_1\| \le L \|x_0-z_0\|
	$$
	Again, we choose $x_2 \in P_C(z_1)$ such that
	$$
	\|x_1-x_2\| \le  \|z_0-z_1\| \le L \|x_0-z_0\|
	$$
	and $z_2 \in S(x_2)$ such that
	$$
	\|z_1-z_2\| \le L \|x_1-x_2\| \le L^2 \|x_0-z_0\|.
	$$
	By repeating this procedure, after $n$ steps, we get
	$$
	\|x_n-x_{n+1}\| \le L^n \|x_0-z_0\|
	$$
	and, since $L<1$, the sequence $\{x_n\}$ is asymptotically regular.  By Theorem \ref{FirstTheoremBanach}, the sequence $\{x_n\}$  admits subsequence which  weakly converges to a projected solution $\x$.
	
	In addition, for  $n>m$ we have
	\begin{align*}
	\|x_m-x_n\|  & \le \|x_{m} -x_{m+1} \| + \cdots + \|x_{n-1}-x_n\| \\
	&\le L^m (1+ L + \cdots + L^{n-m-1}) \|x_0-z_0\| \\
	&\le \frac {L^m} {1-L} \|x_0-z_0\|.
	\end{align*}
	
	Therefore $\{x_n\}$ is a Cauchy sequence and, if $X$ is a Banach space, the whole sequence $\{x_n\}$ strongly converges to the  projected solution $\x$.
\end{proof}

\begin{remark}
	
	Under the assumptions of Proposition \ref{Proposition contraction}, the set-valued map $\mathcal{T}:C\rightrightarrows C$, defined by $\mathcal{T}(x)=P_C(S(x))$, is  a contraction. If $\mathcal{T}$ is closed valued (i.e. $\mathcal{T}(x)$ is a closed set for all $x \in C$), Nadler fixed point theorem (\cite{Nad}) proves that $\mathcal{T}$ has a fixed point in $C$. It is easy to see that this point is a projected solution. Algorithm 1  gives a procedure to find it.
	
\end{remark}

Now, we point out a set of conditions such that our algorithm works in a Hilbert space framework. This result is based on sufficient conditions ensuring lipschitzian property for the solution map $S$  that can be obtain arguing as in the proof of Theorem 2.2.1 in \cite{ManRia} with a particular choice of the control data.

\begin{corollary}
	Let $X$ be a Hilbert space and $C\subset X$ be nonempty, convex and weakly compact set. Let us suppose that
	\begin{itemize}
		\item the map $\Phi$ satisfies assumptions (i)-(iv) of Theorem \ref{FirstTheoremBanach} and there exits $L>0$ such that
		\begin{equation}\label{equation 1 corollary hilbert}
		\Phi(x)\subseteq \Phi(y)+L\left\|x-y\right\|B_X
		\end{equation}
		for every $x,y \in C$;
		\item the bifunction $f$ satisfies assumptions (a)-(d) of Theorem \ref{FirstTheoremBanach}. Moreover let us assume that $f$ is strongly monotone on $\Phi(C)$, i.e., there exists $m>0$ such that
		\begin{equation}\label{equation 2 corollary Hilbert}
		f(x,y)+f(y,x)\leq-m\left\| x-y\right\|^2
		\end{equation}
		for every $x,y \in \Phi(C)$,  and the following additional condition holds: there exists $R>0$ such that
		\begin{equation}\label{equation 3 corollary hilbert}
		\left|f(x,y)-f(x,y')\right|\leq R\left\|y-y'\right\|^2
		\end{equation}
		for every $x,y,y'\in \Phi(C)$.
	\end{itemize}
	If $\sqrt{\frac{2RL}{m}}<1$, then, the sequence generated by the Algorithm 1 strongly converges to a projected solution $\x$.	
\end{corollary}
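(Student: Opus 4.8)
The strategy is to check that the hypotheses here are exactly what Proposition \ref{Proposition contraction} requires: once the projection $P_C$ is nonexpansive and the solution map $S$ is a single-valued contraction, that proposition produces, via Algorithm 1, an asymptotically regular sequence which --- $X$ being a Banach space --- converges strongly to a projected solution of $QEP(f,\Phi)$. So the proof splits into three points: (1) $P_C$ is nonexpansive; (2) $(\Phi,f)$ satisfy all the hypotheses of Theorem \ref{FirstTheoremBanach}, which Proposition \ref{Proposition contraction} invokes to identify the limit; (3) $S$ is a single-valued contraction.

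Point (1) is classical: $C$ is weakly compact, hence norm closed and convex, so in the Hilbert space $X$ the metric projection onto $C$ is single valued and nonexpansive (the fact recalled just before Proposition \ref{Proposition contraction}). For point (2), assumptions (i)--(iv) on $\Phi$ and (a)--(d) on $f$ are in the statement, so only condition (e) of Theorem \ref{FirstTheoremBanach} has to be produced: since $\Phi(C)$ is relatively weakly compact it is bounded, so $D:=\sup\{\|y-y'\|:y,y'\in\overline{\Phi(C)}\}<\infty$ and the quadratic estimate on $f$ gives $|f(x,y)-f(x,y')|\le RD\,\|y-y'\|$ on $\Phi(C)$; thus (e) holds with the constant $h\equiv RD$. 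As in the proof of Theorem \ref{FirstTheoremBanach}, each $\Phi(x)$ is then a nonempty, convex, weakly compact set on which $f$ meets the hypotheses of Theorem \ref{th:EPexistence2}, so $S(x)=\mathrm{Sol}(f,\Phi(x))\neq\emptyset$ for every $x\in C$; and strong monotonicity forces $S$ to be single valued, because $z,z'\in S(x)$ give $f(z,z')\ge 0$ and $f(z',z)\ge 0$, hence $0\le f(z,z')+f(z',z)\le -m\|z-z'\|^2$ and so $z=z'$.

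Point (3) is the heart of the matter; here I would argue as in the proof of Theorem 2.2.1 in \cite{ManRia}, specialised to this setting. Fix $x_1,x_2\in C$ and write $z_i=S(x_i)\in\Phi(x_i)$. Using the $L$-Lipschitz inclusion for $\Phi$, pick $w_1\in\Phi(x_1)$ with $\|w_1-z_2\|\le L\|x_1-x_2\|$ and $w_2\in\Phi(x_2)$ with $\|w_2-z_1\|\le L\|x_1-x_2\|$ (for instance the metric projections of $z_2$ onto $\Phi(x_1)$ and of $z_1$ onto $\Phi(x_2)$, both closed and convex). Since $z_1$ solves $EP(f,\Phi(x_1))$ and $w_1\in\Phi(x_1)$ we have $f(z_1,w_1)\ge 0$, and symmetrically $f(z_2,w_2)\ge 0$; transporting these inequalities to the pair $(z_1,z_2)$ by the quadratic estimate yields $f(z_1,z_2)\ge -RL^2\|x_1-x_2\|^2$ and $f(z_2,z_1)\ge -RL^2\|x_1-x_2\|^2$. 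Adding and invoking strong monotonicity,
$$
-m\,\|z_1-z_2\|^2\ \ge\ f(z_1,z_2)+f(z_2,z_1)\ \ge\ -2RL^2\,\|x_1-x_2\|^2 ,
$$
whence $\|S(x_1)-S(x_2)\|=\|z_1-z_2\|\le L\sqrt{2R/m}\,\|x_1-x_2\|$, and the smallness condition on $R$, $L$ and $m$ makes this constant strictly less than $1$; thus $S$ is a contraction.

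With $P_C$ nonexpansive and $S$ a contraction, Proposition \ref{Proposition contraction} applies verbatim: from any $x_0\in C$, Algorithm 1 generates a sequence $\{x_n\}$ (uniquely determined here, since $P_C$ and $S$ are single valued) that is asymptotically regular and, $X$ being a Banach space, Cauchy; its strong limit $\overline{x}$ is a projected solution of $QEP(f,\Phi)$. I expect the only genuine difficulty to be the contraction estimate in point (3): one must combine, in the correct order, the $L$-Lipschitz selection from $\Phi$, the two equilibrium inequalities $f(z_i,w_i)\ge 0$, the quadratic perturbation bound, and strong monotonicity; everything else is routine verification that the standing assumptions feed correctly into Theorems \ref{th:EPexistence2} and \ref{FirstTheoremBanach} and into Proposition \ref{Proposition contraction}.
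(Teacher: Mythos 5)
Your proposal follows the same route as the paper's own proof: establish that $S$ is nonempty- and single-valued, show that $S$ is Lipschitz by the Ait Mansour--Riahi sensitivity argument, and conclude via the nonexpansiveness of $P_C$ in a Hilbert space together with Proposition \ref{Proposition contraction}. You in fact make explicit two points the paper leaves implicit: the verification of condition (e) of Theorem \ref{FirstTheoremBanach} (needed because Proposition \ref{Proposition contraction} identifies the limit point through that theorem), which you correctly deduce from \eqref{equation 3 corollary hilbert} and the boundedness of $\Phi(C)$; and the contraction estimate itself, for which the paper only cites Theorem 2.2.1 of \cite{ManRia}.

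There is, however, one concrete point of friction at the very last step. Your computation --- which is the correct one given hypotheses \eqref{equation 1 corollary hilbert}--\eqref{equation 3 corollary hilbert} --- yields
$$
\|S(x_1)-S(x_2)\|\ \le\ L\sqrt{\tfrac{2R}{m}}\,\|x_1-x_2\|\ =\ \sqrt{\tfrac{2RL^{2}}{m}}\,\|x_1-x_2\|,
$$
because each of the two equilibrium inequalities is perturbed by an error $R\|w_i-z_j\|^{2}\le RL^{2}\|x_1-x_2\|^{2}$, so the exponent on $L$ is genuinely $2$. The corollary's hypothesis (and the Lipschitz constant claimed in the paper's proof) is instead $\sqrt{2RL/m}<1$. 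Since $L\sqrt{2R/m}=\sqrt{L}\cdot\sqrt{2RL/m}$, the stated hypothesis controls your constant only when $L\le 1$; for $L>1$ your closing sentence ``the smallness condition on $R$, $L$ and $m$ makes this constant strictly less than $1$'' does not follow. I see no way to sharpen the estimate so as to replace $L^{2}$ by $L$, so the discrepancy appears to lie in the paper's stated constant rather than in your argument; nevertheless, as a proof of the corollary as literally stated, your argument closes only under the reading $L\sqrt{2R/m}<1$ (equivalently $2RL^{2}<m$), and you should either flag this or restrict to $L\le 1$.
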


\begin{proof}
	First of all, arguing as in the beginning of the proof of Theorem \ref{FirstTheoremBanach}, we have that $S(x)\neq \emptyset$ for every $x \in C$. Moreover, strong monotonicity assumption implies easily that $S$ is a single valued map. Now, following the proof of  Theorem 2.2.1 (step II and III) in \cite{ManRia} we can  shows that
	$$
	\left\|S(x)-S(y)\right\|\leq \sqrt{\frac{2RL}{m}}\left\|x-y\right\|
	$$
	for every $x,y \in C$. Proposition \ref{Proposition contraction} concludes the proof.
\end{proof}

The next example shows that  Algorithm 1 generates a converging sequence $\{x_n\}$ even if the solution map $S$ is not a contraction. It is worth pointing out that in the following example, the convergence of $\{x_n\}$ does not depend on the choice of the starting point.  The problem considered in the example comes from Example 2.1 in \cite{AusSulVet2016}.

\begin{example} \label{ex:Aussel}Let $C=\{x=(x^1,x^2) \in \R^2: 0\le x^1 \le 1, 0\le x^2\le 1, x^1+x^2 \ge 1 \}$,
	$$
	\Phi(x)= Q + \frac 2 {\|x\|} x,
	$$
	where $Q=\{(x^1,x^2) \in \R^2: 0\le x^1 \le 1, 0\le x^2\le 1 \}$ is the unit square, and $f: \R^2 \times \R^2 \to \R$ given by $f(x,y)= \langle x, y-x \rangle$.

	It is easy to see that the map $\Phi$ and the bifunction $f$ satisfy the assumptions (i)-(iv) and (a)-(e) in Theorem \ref{FirstTheoremBanach}, respectively. Moreover,  standard computations show that, for any $x \in C,$ the solution of $EP(f, \Phi(x))$ is unique and given by
	$$
	S(x)=\frac 2 {\|x\|} x.
	$$
	The solution map $S$ is not a contraction, since
	$$
	\left\|S\left((1,0)\right)- S((0,1))\right\|=\left\|(2,2)\right\|=2\sqrt 2$$
	while   $\|(1,0)-(0,1)\| = \sqrt 2$. The projection on $C$ of the unique solution $S(x)$ is given by
	$$
	P_C(S(x)) =P_C(S((x^1,x^2)))=
	\begin{cases}
	\left (1, \frac{2x^2}{\|x\| }\right )  & \text{ if }  0 \le x^2 < \frac {\sqrt 3} 3 x^1 \\
	\left (1,1 \right ) & \text{ if }   \frac {\sqrt 3} 3 x^1 \le x^2 \le \sqrt{3} x^1 \\
	\left (\frac{2x^1}{\|x\|},1 \right ) & \text{ if }    \sqrt{3} x^1 < x^2 \le 1.
	\end{cases}
	$$
	Starting from a generic point $x_0=(x_0^1,x_0^2) \in C$, in order to analyse the behaviour of the sequence $\{x_n=(x_n^1,x_n^2)\}$ generated by Algorithm 1, we distinguish between several cases:

	\begin{enumerate}	
		\label{1.}
		\item $x_0^2=0$: then, $x_0^1=1$,  $P_C(S((1,0)))= (x_1^1,x_1^2) = (1,0)$. Hence, the sequence $\{(x_n^1,x_n^2)\}$  is constantly equal to $(1,0)$, and $(1,0)$ is a projected solution;
		\item  $0 < x_0^2 < \frac {\sqrt 3} 3 x_0^1$: we have $P_C(S((x_0^1,x_0^2)))= (x_1^1,x_1^2) = \left (1, \frac{2x_0^2}{\|x_0\| } \right )$. Then, if
		$
		\frac {x_0^1}{x_0^2} \le \sqrt {11}
		$
		we have
		$$
		P_C(S((x_1^1,x_1^2)))= (x_2^1,x_2^2) = (1,1)
		$$ and the sequence $\{(x_n^1,x_n^2)\}$  is constant for $n \ge 2$,  and $(1,1)$ is a projected solution. If $\frac {x_0^1}{x_0^2} > \sqrt {11}$, we get
		$$
		P_C(S((x_1^1,x_1^2)))= (x_2^1,x_2^2) = \left (1,\frac{4x_0^2}{\sqrt{\left(x_0^1\right)^2+5\left(x_0^2\right)^2} }\right ).
		$$
		Again, if $\frac {x_0^1}{x_0^2} \le \sqrt {43}$ then $P_C(S((x_2^1,x_2^2)))= (x_3^1,x_3^2) = (1,1)$ and the sequence $\{(x_n^1,x_n^2)\}$  is constant for $n \ge 3$,  and ($1,1)$ is a projected solution.
		In general, the  $k$-step of the Algorithm 1 gives
		$$
		(x_k^1,x_k^2)= \left (1, \frac{2^kx_0^2}{\sqrt{\left(x_0^1\right)^2+\frac{(4^k-1)}{3} \left(x_0^2\right)^2} } \right ).$$
		Since for any $x_0^2>0$ there exists $k_0 \in \N$ such that
		$$
		\frac{2^kx_0^2}{\sqrt{\left(x_0^1\right)^2+\frac{(4^k-1)}{3} \left(x_0^2\right)^2} } \ge \frac{\sqrt{3}} {3},
		$$
		we can conclude that the sequence $\{x_n=(x_n^1,x_n^2)\}$  is constantly equal to $(1,1)$ for $n \ge k_0$,  and ($1,1)$ is a projected solution.
		\item  $\frac {\sqrt 3} 3 x_0^1 \le x_0^2 \le \sqrt{3} x_0^1$: we have  $P_C(S((x_0^1,x_0^2)))= (x_1^1,x_1^2) = (1,1)$ and $P_C(S((1,1))) = (1,1) = (x_2^1,x_2^2)$. Thus the sequence $\{x_n=(x_n^1,x_n^2)\}$  is constant for $n \ge 1$,  and ($1,1)$ is a projected solution;
		\item $\sqrt{3} x_0^1 < x_0^2 \le 1$ with $x_0^1\neq 0$: by following the same reasoning as in case 2., we conclude that  $(x_n^1,x_n^2)=(1,1)$ eventually,  and $(1,1)$ is a projected solution.
		\item $x_0^1=0$: then, $x_0^2=1$ and  as in  case 1. we conclude that $(x_n^1,x_n^2)=(0,1)$ for every $n$,  and $(0,1)$ is a projected solution.
	\end{enumerate}
\end{example}

We conclude this section by giving another example where Algorithm 1 works. This example is quite simple but it has some interest since its setting is infinite dimensional and $C$ is a weakly compact set that is not compact.

\begin{example}
	Let $X= \ell_2$, $C=\{x=(x^k) \in X: x^k\geq 0  \}\cap B_{X}$,  
	$$
	\Phi(x)= \left(3-\left\|x\right\|\right)\tilde{x}+ \left\lbrace u=(u^k) \in X: 0\leq u^k \leq \frac{1+\left\|x\right\|}{ k} \right\rbrace,
	$$
	where $\tilde{x}=\left(\frac{1}{k}\right)_{k=1}^\infty \in X$ and $f:\ell_2 \times \ell_2 \to \R$  given by  $f(x,y)= \langle x, y-x \rangle$.
	Since $\|\tilde{x}\|=\frac{\pi}{\sqrt{6}}$ we get that $\Phi(x)\cap C=\emptyset$ for every $x \in C$.
	Moreover, the following properties hold:
	\begin{itemize}
		\item $C$ is nonempty, convex and weakly compact subset of $X$;
		\item $\Phi(x)$ is a nonempty and convex set for every $x\in C$;
		\item $\Phi(C)=2\tilde{x}+ \left\lbrace u=(u^k) \in X: 0\leq u^k \leq 2 \tilde{x}^k \right\rbrace$ is a compact set;
		\item $\Phi$ is sequentially weakly lower semicontinuous for every $x\in C$. Indeed, this property follows from Proposition \ref{Pro:semicontinuity}, since $\Phi$ is sequentially lower semicontinuous, $X$ is a Hilbert space, hence uniformly convex and $\Phi(x)\subseteq \Phi(y)$ for every $x,y \in C$ such that $\|x\|\leq\|y\|$,
		\item $f$ satisfies the assumptions (a)-(e) in Theorem \ref{FirstTheoremBanach}.
	\end{itemize}

	Easy computations show that, for any $x \in C,$ the solution of $QEP(f, \Phi)$  is unique and given by
	$$
	S(x)=\left(3-\|x\|\right)\tilde{x}
	$$
	for every $x \in C$. 	The solution map $S$ is not a contraction, since
	$$
	\left\|S\left(x\right)- S(y)\right\|\leq\frac{\pi}{\sqrt{6}}\|x-y\|.
	$$
	The projection on $C$ of the unique solution $S(x)$ is given by
	$$
	P_C(S(x)) =\frac{\tilde{x}}{\|\tilde{x}\|}.
	$$
	By taking in account these facts, it follows immediately that the sequence generated by Algorithm 1 becomes constantly equal to $\frac{\tilde{x}}{\|\tilde{x}\|}$ after two steps, whatever we choose the starting point $x_0$.

\end{example}

\section{Set-valued quasi variational inequalities}

In this section we apply our result about the existence of projected solutions of a general quasi equilibrium problem to a special case. Namely, we deal with quasi variational inequalities which have been  extensively studied in recent literature also for their interesting applications (see \cite{Len13,KanSteck19}).

First of all, we recall  a special bifunction. Indeed, any set-valued map  $T:X\rightrightarrows X^*$ properly interacts with the representative bifunction $G_T:X\times X\to \R\cup \{\pm \infty\},$ given by
$$
G_T(x,y)=\sup_{x^*\in T(x)}\langle x^*, y-x\rangle.
$$

The  \emph{set-valued quasi variational inequality} $QVI(T,\Phi)$: find $\x \in \Phi(\x)$  such that
\begin{equation}\label{eq:VIsup}
\sup_{x^* \in T(\x)} \langle x^*, y-\x \rangle \ge 0 \quad \text{for all\;} y \in \Phi(\x),
\end{equation}
can be seen as the quasi equilibrium problem connected to the bifunction $G_T$ and the set valued map $\Phi: C \rightrightarrows X,$ where $\Phi(C) \subseteq \rm{dom}(T)$.

We remind that a projected solution for $QVI(T,\Phi)$ in \eqref{eq:VIsup} is  a point $x_0  \in P_C(z_0)$ where $z_0$ solves the following set-valued variational inequality: find $z \in \Phi(x_0)$ such that
\begin{equation}\label{eq:VIsup1}
\sup_{x^* \in T(z)} \langle x^*, y- z \rangle \ge 0 \quad \text{for all\;} y \in \Phi(x_0).
\end{equation}

To determine the existence of projected solutions for the quasi variational inequality $QVI(T,\Phi)$, this section applies the findings of Section 3 to the bifunction  $G_T$. Here, we investigate the same results as \cite{Bikapi21} under different conditions in the normed space $X$, and our analysis lead us to conclude that the projected solution for $QVI(T,\Phi)$ in the normed space $X$ is also achievable.




We first remark that $G_T(x,x)=0$ and that $G_T(x, \cdot)$ is convex for all $x \in X$. Motivated by Proposition 7 and Theorem 5 in \cite{Bikapi21}, we show how $G_T$, based on a few conditions on $T$, fulfills  properties (c), (d), and (e) in Theorem \ref{FirstTheoremBanach}.




\begin{proposition}\label{propvarineq}
	Let $D$ be a nonempty bounded subset of a separable normed space $X$ and   $T:X\rightrightarrows X^*$ with $D \subseteq  \rm{dom}(T)$.
	Suppose   that
	\begin{enumerate}[(i)]
		\item $T$ is s-w$^*$-closed on $D$;
		\item $T$ is bounded;
		\item $T$ is of type $S_+$ on $D$.
	\end{enumerate}
	Then $G_T$ is of type $S_+$ on $D$, $G_T(\cdot, y)$ is sequentially upper semicontinuous for every $y\in D$ and
	$$
	|G_T(z,y)-G_T(z,x)| \le h(z) ||y-x||, \quad \text{ for all } x,y,z \in D
	$$
	where $h: \rm{dom}(T) \to \R_+$ is bounded on bounded sets.
\end{proposition}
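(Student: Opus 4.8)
I would prove the three assertions separately, from the easiest to the hardest. A preliminary remark makes everything run smoothly: since $T$ is bounded and $D$ is bounded, the number $h(z):=\sup_{z^*\in T(z)}\|z^*\|$ is finite for every $z\in D$ and, moreover, $h$ is bounded on bounded sets (the image under $T$ of a bounded set is bounded). In particular $G_T$ is real-valued on $D\times D$, because $|\langle z^*,y-z\rangle|\le h(z)\|y-z\|$ for all $z^*\in T(z)$, and $T(z)\neq\emptyset$ as $D\subseteq\mathrm{dom}(T)$.

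The Lipschitz-type estimate is then immediate from the elementary inequality $\sup_a\varphi(a)-\sup_a\psi(a)\le\sup_a(\varphi(a)-\psi(a))$: for $x,y,z\in D$,
\[
G_T(z,y)-G_T(z,x)\le \sup_{z^*\in T(z)}\langle z^*,y-x\rangle\le h(z)\,\|y-x\|,
\]
and exchanging the roles of $x$ and $y$ gives $|G_T(z,y)-G_T(z,x)|\le h(z)\|y-x\|$. For the type $S_+$ property of $G_T$, let $\{x_n\}\subseteq D$ with $x_n\to^w x\in D$ and $\liminf_n G_T(x_n,x)\ge 0$. Since $G_T(x_n,x)$ is finite, for each $n$ I choose a near-maximizer $x_n^*\in T(x_n)$ with $\langle x_n^*,x-x_n\rangle\ge G_T(x_n,x)-1/n$; then $\liminf_n\langle x_n^*,x-x_n\rangle\ge\liminf_n G_T(x_n,x)\ge 0$. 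Because $T$ is of type $S_+$ on $D$ (assumption (iii)), this forces $x_n\to^s x$, which is exactly the type $S_+$ property of $G_T$.

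The remaining, and most delicate, claim is the sequential upper semicontinuity of $G_T(\cdot,y)$. Fix $y\in D$, take $x_n\to^s x$ with $\{x_n\}\subseteq D$, $x\in D$, and set $\lambda:=\limsup_n G_T(x_n,y)$; passing to a subsequence we may assume $G_T(x_n,y)\to\lambda$. Pick $x_n^*\in T(x_n)$ with $\langle x_n^*,y-x_n\rangle\ge G_T(x_n,y)-1/n$, so that $\langle x_n^*,y-x_n\rangle\to\lambda$ as well. By boundedness of $T$ the sequence $\{x_n^*\}$ is bounded in $X^*$; since $X$ is separable, bounded sequences in $X^*$ admit weak$^*$-convergent subsequences, so along a further subsequence $x_n^*\to^{w^*}x^*$. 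From $x_n\to^s x$, $x_n^*\to^{w^*}x^*$ and the s-w$^*$-closedness of $T$ (assumption (i)) we get $x^*\in T(x)$. Finally I pass to the limit in the pairing by writing $\langle x_n^*,y-x_n\rangle=\langle x_n^*,x-x_n\rangle+\langle x_n^*-x^*,y-x\rangle+\langle x^*,y-x\rangle$: the first term tends to $0$ since $\{x_n^*\}$ is bounded and $\|x-x_n\|\to 0$, the second tends to $0$ by weak$^*$ convergence; hence $\lambda=\langle x^*,y-x\rangle\le G_T(x,y)$, as required.

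I expect the main obstacle to be precisely this last step. One has to combine a strongly convergent sequence $x_n$ with a merely weak$^*$-convergent sequence of functionals $x_n^*$, and the passage from ``weak$^*$-compact'' to a genuine convergent \emph{subsequence} of the $x_n^*$ rests essentially on the separability hypothesis (weak$^*$-sequential compactness of bounded subsets of $X^*$), while s-w$^*$-closedness is exactly what is needed to keep the limit functional inside $T(x)$. The boundedness of $T$ plays a double role: it makes the near-maximizers a bounded sequence and it guarantees the finiteness of $G_T$ and of $h$.
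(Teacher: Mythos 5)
Your proof is correct and follows essentially the same route as the paper's: choose (near-)maximizing functionals, use boundedness of $T$ together with separability of $X$ to extract a weak$^*$-convergent subsequence, invoke s-w$^*$-closedness to keep the limit functional in $T(x)$, and transfer the $S_+$ property from $T$ to $G_T$. The one genuine difference is that you work with $1/n$-maximizers throughout, whereas the paper first argues (again via separability) that each value $T(x)$ is weak$^*$ compact so that the suprema defining $G_T$ are attained; your variant shows that this attainment is dispensable, so separability is used only once, to get weak$^*$ sequential compactness of the bounded sequence $\{x_n^*\}$ in the upper semicontinuity step --- which is exactly the property isolated in Remark \ref{Rem not separable}. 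Your Lipschitz estimate via $\sup_a\varphi(a)-\sup_a\psi(a)\le\sup_a(\varphi(a)-\psi(a))$ likewise avoids attainment, and your direct computation of $\limsup_n G_T(x_n,y)$ replaces the paper's argument by contradiction; both are minor but slightly cleaner.
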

\begin{proof}
	The approach to proving this proposition is the same as the strategy in proof of Proposition 7 and the first part of Theorem 5 in \cite{Bikapi21}, but the assumptions are somewhat different and we prefer to repeat it here for the reader's convenience.

	First note that, for every $x \in D$, (i) implies that $T(x) $ is sequentially weak$^*$ closed. Moreover, by (ii)  $T(x)$ is bounded for all $x \in D$. Since $X$ is separable, $T(x)$ is weak$^*$ closed, hence it is weak$^*$  compact and also sequentially weak$^*$ compact.

	To prove that $G_T$ is of type $S_+$ on $D$, let us consider an  arbitrary sequence $\{x_n\} \subseteq D$, such that $x_n\to^{w} x\in D$, and $\liminf_{n\rightarrow \infty}G_T(x_n, x)\geq 0$. Then, by definition of $G_T$, we have
	$$\liminf_{n\rightarrow \infty} \sup_{x_n^*\in T(x_n)}\langle x_n^*, x-x_n\rangle \geq 0.$$
	
	Weak$^*$ compactness of $T(x_n)$ implies the existence of $z_n^*\in T(x_n)$ such that
	$$\sup_{x_n^*\in T(x_n)}\langle x_n^*, x-x_n\rangle= \langle z_n^*, x-x_n\rangle,$$
	and therefore, $\liminf_{n\rightarrow \infty}\langle z_n^*, x-x_n\rangle \geq 0$. As $T$ is of type $S_+$ on $D$, we deduce $x_n\rightarrow ^s x$ and this proves that $G_T$ is of type $S_+$ on $D$.
	
	To show now that $G_T(\cdot, y)$ is sequentially upper semicontinuous for every $y \in D$, consider an arbitrary sequence $\{x_n\}\subset D$  such that $x_n\to ^s x\in D$. By contradiction, suppose that  there exists $\overline{y}\in D$ such  that
	\begin{eqnarray}\label{propertyG_T}
	G_T(x, \overline{y})< \limsup_{n\rightarrow +\infty}G_T(x_n, \overline{y})=\lim_{k\rightarrow +\infty}G_T(x_{n_k}, \overline{y}),
	\end{eqnarray}
	where $\{x_{n_k}\}$ is a subsequence of $\{x_n\}$. Weak$^*$ compactness of $T(x_{n_k})$ implies that there exists $x_{n_k}^*\in T(x_{n_k})$ so that
	$$G_T(x_{n_k}, \overline{y}) = \langle x_{n_k}^*, \overline{y}-x_{n_k}\rangle.$$
	Since $T(\{x_n\})\subset T(D)$ is bounded and $X$ is separable ,  it follows that $x_{n_k}^*\to^{w^*} x^*$ (without loss of generality). Eventually by s-w$^*$- closedness of $T$, $x^*\in T(x)$ is deduced. Hence,
	$$\lim_{k\rightarrow +\infty}G_T(x_{n_k}, \overline{y}) = \lim_{k\rightarrow +\infty}\langle x_{n_k}^*, \overline{y}-x_{n_k}\rangle= \langle x^*, \overline{y}-x\rangle\leq G_T(x, \overline{y}),$$
	consequently,  \eqref{propertyG_T} is contradicted.
	
	Finally, given $z \in D$,
	by the weak$^*$  compactness of $T(z)$, there exists $u^*\in T(z)$ such that $G_T(z, y)= \langle u^*, y-z\rangle$. Then
	
	\begin{eqnarray*}
		G_T(z, y) - G_T(z, x)&\leq& \langle u^*, y-z\rangle - \langle u^*, x-z\rangle = \langle u^*, y-x\rangle \\
		&\leq& \sup_{v^*\in T(z)}\|v^*\| \|y-x\|,
	\end{eqnarray*}
	and consequently, denoting by $h(z) = \sup_{v^*\in T(z)}\|v^*\|$ we have
	$$G_T(z, y) - G_T(z, x)\leq h(z) \|y-x\|.$$
	Since $T$ is bounded,   the function $h:  \rm{dom}(T) \rightarrow \R_+$ is  bounded on bounded sets.  Therefore, the claim is true.
\end{proof}

\begin{remark}\label{Rem not separable}
	
	A key point of the proof of the previous proposition is a property of the space $X^*$. Namely, $X^*$ should be such that a norm bounded and sequentially weak$^*$ closed set is also a sequentially weak$^*$ compact.

	Indeed, if this property holds, the proof of Proposition \ref{propvarineq} works well since $T(x)$ is sequentially weak$^*$ compact and a functional $y \in X$ attains its maximum on $T(x)$.
	
	In Proposition \ref{propvarineq}, separability of $X$ entails the validity of the property mentioned above. Nevertheless, if we suppose that $X$ is a Banach space, this property  can be obtained under different assumptions (not related to separability of $X$). Indeed, let us suppose that $X$ is such that $X^*$ does not contain an isomorphic copy of the Banach space $\ell_1$. Then Rosenthal's $\ell_1$ Theorem (see Chapter XI in \cite{Diestel84}) implies that each bounded sequence $\{x^*_n\}$ in $X^*$ has a weakly Cauchy subsequence $\{x^*_{n_k}\}$ (i.e. $\left\lbrace \langle x^{**},x^*_{n_k}\rangle\right\rbrace$ converges for every $x^{**} \in X^{**}$). Then $\{x^*_{n_k}\}$ is also a weakly$^*$ Cauchy sequence (i.e. $\left\lbrace \langle x^*_{n_k}, x \rangle\right\rbrace$ converges for every $x \in X$) and it is weakly$^*$ convergent since $X^*$ is weakly$^*$ sequentially complete (see Corollary 2.6.21 in \cite{Megginson}).
	
	In particular, in any reflexive space it happens that a norm bounded and sequentially weak$^*$ closed set is also a sequentially weak$^*$ compact. For other conditions that implies the sequential weak$^*$ compactness of bounded and weakly$^*$ closed set see Chapter XIII in \cite{Diestel84}.
	
\end{remark}

With support of the above-noted connection between the properties of $T$ and $G_T,$ we can now state the following results regarding the existence of a projected solution to $QVI(T,\Phi)$ in a normed space. The proof follows from Theorem \ref{FirstTheoremBanach} and  Proposition \ref{propvarineq} applied to  $D = \Phi(C)$.

\begin{theorem}\label{FirstTheoremBanachVarIneq}
	Let $C$ be a  nonempty, convex, and weakly compact subset of a separable normed space $X$. Let $\Phi: C \rightrightarrows X$
	be such that
	\begin{enumerate}[(i)]
		\item $\Phi(x)$ is nonempty and convex for every $x \in C$;
		\item  $\Phi(C)$ is relatively weakly compact;
		\item $\Phi$ is  sequentially weakly lower semicontinuous;
		\item $\Phi$ is weakly closed.
	\end{enumerate}
	Let $T:X\rightrightarrows X^*$ with  $\Phi(C)\subseteq \mathrm{dom}(T)$, satisfying the following properties:
	
	\begin{enumerate}[(a)]
		\item $T$ is s-w$^*$-closed on $\Phi(C)$;
		\item $T$ is bounded;
		\item $T$ is of type $S_+$ on $\Phi(C)$.
	\end{enumerate}
	If the sequence $\{x_n\}$ generated by Algorithm 1 is asymptotically regular, then it admits a weak limit point which is a projected solution of $QVI(T,\Phi)$.
\end{theorem}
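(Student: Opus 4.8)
The plan is to observe that $QVI(T,\Phi)$ is precisely the quasi equilibrium problem $QEP(G_T,\Phi)$ associated with the representative bifunction $G_T(x,y)=\sup_{x^*\in T(x)}\langle x^*, y-x\rangle$, and then to check that the pair $(G_T,\Phi)$ meets all the hypotheses of Theorem \ref{FirstTheoremBanach}, with the role of $\Phi(C)$ played by $D=\Phi(C)$. Since $\Phi(C)$ is relatively weakly compact it is weakly bounded, hence norm bounded; so $D=\Phi(C)$ is a nonempty bounded subset of the separable normed space $X$ contained in $\mathrm{dom}(T)$, which is exactly the setting of Proposition \ref{propvarineq}. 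The hypotheses (i)--(iv) on $\Phi$ in Theorem \ref{FirstTheoremBanach} coincide verbatim with (i)--(iv) assumed here, so nothing has to be done for them.

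For the bifunction side, I would first record the two elementary facts already noted before the statement: $G_T(x,x)=0$ for every $x$, which gives (a), and $G_T(x,\cdot)$ is convex, so that each sublevel set $\{y\in\Phi(C): G_T(x,y)<0\}$ is convex, giving (b). Because $T$ is bounded and $D$ is bounded, $G_T$ is finite on $D\times D$, so $G_T$ restricted to $\Phi(C)\times\Phi(C)$ is a genuine real-valued bifunction, which is all that conditions (a)--(e) of Theorem \ref{FirstTheoremBanach} ever use. The remaining three properties are exactly the content of Proposition \ref{propvarineq}: applying it with $D=\Phi(C)$ and using assumptions (a) $T$ is s-$w^*$-closed on $\Phi(C)$, (b) $T$ is bounded, (c) $T$ is of type $S_+$ on $\Phi(C)$, we obtain that $G_T$ is of type $S_+$ on $\Phi(C)$ (property (d)), that $G_T(\cdot,y)$ is sequentially upper semicontinuous for every $y\in\Phi(C)$ (property (c)), and that $|G_T(z,y)-G_T(z,x)|\le h(z)\|y-x\|$ on $\Phi(C)$ with $h$ bounded on bounded sets (property (e)).

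With (i)--(iv) and (a)--(e) verified for $(G_T,\Phi)$, Theorem \ref{FirstTheoremBanach} applies directly: the solution maps $S(x)=\mathrm{Sol}(G_T,\Phi(x))$ are nonempty for every $x\in C$, so Algorithm 1 is well defined, and if the generated sequence $\{x_n\}$ is asymptotically regular then it admits a weak limit point $\x$ which is a projected solution of $QEP(G_T,\Phi)$. Unwinding the definitions, this means $\x\in P_C(\z)$ for some $\z\in\Phi(\x)$ solving $\sup_{x^*\in T(\z)}\langle x^*, y-\z\rangle\ge 0$ for all $y\in\Phi(\x)$, i.e. $\z$ solves \eqref{eq:VIsup1}; hence $\x$ is a projected solution of $QVI(T,\Phi)$ in the sense of \eqref{eq:VIsup}, which completes the argument. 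I do not expect a serious obstacle: the whole analytic content has been packaged into Proposition \ref{propvarineq}, and the only points needing a line of care are deducing boundedness of $\Phi(C)$ from its relative weak compactness (so that Proposition \ref{propvarineq} applies and $G_T$ remains finite on the relevant set) and matching the notion of projected solution for $QEP(G_T,\Phi)$ with that for $QVI(T,\Phi)$.
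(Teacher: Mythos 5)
Your proposal is correct and follows essentially the same route as the paper, which proves this theorem precisely by applying Proposition \ref{propvarineq} with $D=\Phi(C)$ to verify conditions (c)--(e) for the representative bifunction $G_T$ and then invoking Theorem \ref{FirstTheoremBanach}. Your added remarks --- that relative weak compactness of $\Phi(C)$ gives the boundedness needed for Proposition \ref{propvarineq} and for finiteness of $G_T$, and that $G_T(x,x)=0$ and convexity of $G_T(x,\cdot)$ yield (a) and (b) --- are exactly the details the paper leaves implicit.
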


By applying Proposition \ref{propvarineq} with $D=\Phi(C)$ and Corollary \ref{SecondTheoremBanach}, we get easily the following.

\begin{corollary}\label{SecondTheoremBanachVarIneq}
	Let $C$ be a  nonempty, convex, bounded, and boundedly w-compact subset of a separable normed space $X$. Let $\Phi: C \rightrightarrows X$ be a set-valued map with nonempty and convex values while $\overline{\Phi(C)}$ is bounded and boundedly w-compact.
	Let $T:X \rightrightarrows X^*$ be a set-valued map with  $\Phi(C)\subseteq \mathrm{dom}(T)$. Suppose that the conditions (iii)-(iv) and (a)-(c)  of Theorem \ref{FirstTheoremBanachVarIneq} are satisfied. If the sequence $\{x_n\}$ generated by Algorithm 1 is asymptotically regular, then it admits a weak limit point which is a projected solution of $QVI(T,\Phi)$.
\end{corollary}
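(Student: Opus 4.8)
The plan is to derive Corollary~\ref{SecondTheoremBanachVarIneq} as a direct application of Corollary~\ref{SecondTheoremBanach} (which is the quasi-equilibrium version for bounded, boundedly $w$-compact sets in a normed space) to the representative bifunction $G_T$ associated with $T$, after checking via Proposition~\ref{propvarineq} that $G_T$ inherits the bifunction hypotheses (c)--(e) of Theorem~\ref{FirstTheoremBanach}. Concretely, I would set $D=\overline{\Phi(C)}$ (or $D=\Phi(C)$, since both are bounded and it is their values that matter), observe that $D$ is a bounded subset of the separable normed space $X$ with $D\subseteq\mathrm{dom}(T)$, and invoke Proposition~\ref{propvarineq} to obtain that $G_T$ is of type $S_+$ on $D$, that $G_T(\cdot,y)$ is sequentially upper semicontinuous for every $y\in D$, and that $|G_T(z,y)-G_T(z,x)|\le h(z)\|y-x\|$ for all $x,y,z\in D$ with $h$ bounded on bounded sets.

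The next step is to verify the remaining two structural properties of $G_T$ that Corollary~\ref{SecondTheoremBanach} requires of the bifunction $f$, namely (a) $f(x,x)=0$ and (b) convexity of the lower level set $\{y : f(x,y)<0\}$. Both are immediate and in fact were already noted in the text just before Proposition~\ref{propvarineq}: $G_T(x,x)=\sup_{x^*\in T(x)}\langle x^*,0\rangle=0$, and for fixed $x$ the map $y\mapsto G_T(x,y)=\sup_{x^*\in T(x)}\langle x^*,y-x\rangle$ is a supremum of affine functions, hence convex, so its sublevel set at height $0$ is convex. Thus all of (a)--(e) of Theorem~\ref{FirstTheoremBanach} (equivalently, the hypotheses on $f$ in Corollary~\ref{SecondTheoremBanach}) hold for $f=G_T$ on $\Phi(C)$.

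With these verifications in place, I would apply Corollary~\ref{SecondTheoremBanach} with this choice of $f=G_T$: $C$ is nonempty, convex, bounded and boundedly $w$-compact; $\Phi$ has nonempty convex values with $\overline{\Phi(C)}$ bounded and boundedly $w$-compact; $\Phi$ satisfies (iii)--(iv); and $G_T$ satisfies (a)--(e). The conclusion is that an asymptotically regular sequence $\{x_n\}$ produced by Algorithm~1 admits a weak limit point $\x$ which is a projected solution of $QEP(G_T,\Phi)$. Finally I would translate this back into the variational-inequality language: $\x\in P_C(\z)$ with $\z\in\mathrm{Sol}(G_T,\Phi(\x))$ means exactly that $\z\in\Phi(\x)$ and $G_T(\z,y)\ge 0$, i.e. $\sup_{x^*\in T(\z)}\langle x^*,y-\z\rangle\ge 0$, for all $y\in\Phi(\x)$, which is precisely \eqref{eq:VIsup1}; hence $\x$ is a projected solution of $QVI(T,\Phi)$.

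There is essentially no hard step here — the corollary is a ``wrapper'' result, and the entire mathematical content sits in Proposition~\ref{propvarineq} and Corollary~\ref{SecondTheoremBanach}, both already available. The only point requiring a moment's care is the bookkeeping with the domain: one must make sure that the set $D$ on which Proposition~\ref{propvarineq} is applied contains all the values $\Phi(x)$, $x\in C$, that are actually used in Algorithm~1 and in the proof of Theorem~\ref{FirstTheoremBanach} (the limit point argument uses $z_n\in\Phi(x_n)\subseteq\Phi(C)$ and strong limits $\z$ lying in $\Phi(\x)$, all contained in $\overline{\Phi(C)}$), and that boundedness of $\Phi(C)$ — needed so that $h$ is bounded along the relevant sequences — is exactly the standing hypothesis. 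So the proof reduces to: check (a)--(b) directly, invoke Proposition~\ref{propvarineq} for (c)--(e), apply Corollary~\ref{SecondTheoremBanach}, and reinterpret the projected solution of $QEP(G_T,\Phi)$ as a projected solution of $QVI(T,\Phi)$.
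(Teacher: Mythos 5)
Your proposal is correct and follows exactly the route the paper takes: the paper proves this corollary by applying Proposition \ref{propvarineq} with $D=\Phi(C)$ to obtain properties (c)--(e) for $G_T$ (with (a)--(b) noted beforehand as immediate from the definition of $G_T$), and then invoking Corollary \ref{SecondTheoremBanach}. Your additional remarks on the domain bookkeeping and the translation back to the $QVI$ formulation are consistent with, and slightly more explicit than, the paper's one-line argument.
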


In view of Remark \ref{Rem not separable}, we have that Theorem  \ref{FirstTheoremBanachVarIneq} and Corollary \ref{SecondTheoremBanachVarIneq} hold true whenever a bounded and weak$^*$ closed set is also sequentially weak$^*$ compact. Therefore, in particular, we have the following result in reflexive spaces whose proof can be carried out with similar steps as the previous ones.

\begin{theorem}\label{ProjSolForT}
	Let $C$ be a nonempty, convex, bounded, and closed set of a reflexive  space $X$. Let $\Phi: C \rightrightarrows X$ be a set-valued map with nonempty, and convex values, satisfying conditions (iii)-(iv) of Theorem \ref{FirstTheoremBanach} and  with $\Phi(C)$ bounded.
	Let   $T:X\rightrightarrows X^*,$ such that $\Phi(C)\subseteq \mathrm{dom}(T)$.
	Suppose that $T$ is  s-w-closed, bounded  and of type $S_+$ on $\Phi(C)$. If the sequence $\{x_n\}$ generated by Algorithm 1 is asymptotically regular, then it admits a weak limit point which is a projected solution of $QVI(T,\Phi)$.
\end{theorem}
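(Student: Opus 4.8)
The plan is to deduce Theorem \ref{ProjSolForT} from Theorem \ref{FirstTheoremBanach} by verifying that the bifunction $G_T$ inherits the properties (c), (d), (e) from the hypotheses on $T$, exactly as in Proposition \ref{propvarineq}, but using the reflexivity of $X$ in place of separability to secure the crucial weak$^*$-compactness of the images $T(x)$. First I would recall (as in Remark \ref{Rem not separable}) that in a reflexive space a norm-bounded and weakly$^*$-closed subset of $X^*$ is weakly$^*$ compact and sequentially weakly$^*$ compact, because on reflexive $X$ the weak and weak$^*$ topologies on $X^*$ coincide and bounded weakly closed sets are weakly compact. Hence, since $T$ is bounded and s-w-closed on $\Phi(C)$, each $T(x)$ (for $x \in \Phi(C)$) is a bounded, weakly closed, hence weakly compact subset of $X^*$, so every continuous linear functional---in particular every $y - x$ viewed through the duality---attains its supremum on $T(x)$. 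This is precisely the ingredient that made the proof of Proposition \ref{propvarineq} work, so that argument transfers verbatim.

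With this in hand, the next step is to run the three verifications. For type $S_+$: given $\{x_n\} \subseteq \Phi(C)$ with $x_n \to^w x$ and $\liminf_n G_T(x_n,x) \ge 0$, pick $z_n^* \in T(x_n)$ attaining the sup, so $\liminf_n \langle z_n^*, x - x_n \rangle \ge 0$, and invoke that $T$ is of type $S_+$ to get $x_n \to^s x$. For sequential upper semicontinuity of $G_T(\cdot, y)$: if $x_n \to^s x$, pass to a subsequence realizing the $\limsup$, choose $x_{n_k}^* \in T(x_{n_k})$ attaining $G_T(x_{n_k}, y)$, use boundedness of $T$ to extract (via sequential weak$^*$ compactness of bounded sets, valid since $X$ is reflexive) a weakly$^*$ convergent subsequence $x_{n_k}^* \to^{w^*} x^*$, apply s-w-closedness to get $x^* \in T(x)$, and conclude $\lim_k G_T(x_{n_k}, y) = \langle x^*, y - x \rangle \le G_T(x, y)$, contradicting strict inequality. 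For the Lipschitz-type estimate (e): for $z \in \Phi(C)$ choose $u^* \in T(z)$ with $G_T(z,y) = \langle u^*, y - z\rangle$, whence $G_T(z,y) - G_T(z,x) \le \langle u^*, y - x\rangle \le h(z)\|y-x\|$ with $h(z) = \sup_{v^* \in T(z)}\|v^*\|$, which is bounded on bounded sets because $T$ is bounded. Since $G_T(x,x) = 0$ and $G_T(x, \cdot)$ is convex (so $\{y : G_T(x,y) < 0\}$ is convex), conditions (a) and (b) of Theorem \ref{FirstTheoremBanach} also hold, and $\Phi$ satisfies (i)--(iv) there by hypothesis.

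Having checked all hypotheses of Theorem \ref{maintheorem} (the reflexive version of Theorem \ref{FirstTheoremBanach}, where relative weak compactness of $\Phi(C)$ is replaced by mere boundedness), I would apply it to the bifunction $f = G_T$ on the bounded set $D = \Phi(C)$: asymptotic regularity of $\{x_n\}$ then yields a weak limit point $\x$ which is a projected solution of $QEP(G_T, \Phi)$, that is, there exists $z_0 \in \mathrm{Sol}(G_T, \Phi(\x))$ with $\x \in P_C(z_0)$. Finally, I would observe that $\mathrm{Sol}(G_T, \Phi(x_0))$ is exactly the solution set of the set-valued variational inequality \eqref{eq:VIsup1}, since $G_T(z,y) \ge 0$ for all $y \in \Phi(x_0)$ unwinds to $\sup_{x^* \in T(z)}\langle x^*, y - z\rangle \ge 0$ for all $y \in \Phi(x_0)$; hence $\x$ is precisely a projected solution of $QVI(T,\Phi)$ in the sense of \eqref{eq:VIsup}. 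I do not expect a serious obstacle: the only point that requires care is making the weak$^*$-compactness argument for $T(x)$ go through without separability, and this is exactly what Remark \ref{Rem not separable} supplies in the reflexive case; everything else is a transcription of the proofs of Proposition \ref{propvarineq} and Theorem \ref{FirstTheoremBanach} (whose "remaining parts" were already deferred to that theorem's proof in Theorem \ref{maintheorem}).
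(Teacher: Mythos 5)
Your proposal is correct and follows essentially the same route as the paper: the paper itself only sketches this proof, pointing to Remark \ref{Rem not separable} (reflexivity supplies the sequential weak$^*$ compactness of the bounded, s-w-closed images $T(x)$ that separability supplied in Proposition \ref{propvarineq}) and then repeating the arguments of Proposition \ref{propvarineq} and Theorem \ref{maintheorem} with $D=\Phi(C)$. Your write-up fills in exactly those steps, so there is nothing to object to.
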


\begin{remark}
	If $T(x) = x$, then $G_T(x, y)= \langle x, y-x\rangle$. Therefore, the last two  examples of the previous section works also in this setting.
\end{remark}

\section{Conclusions}
Our study analyzes quasi equilibrium problems with non-self constraint map. We investigated this class of problems by using the concept of a projected solution and proved its existence in normed spaces. In fact, based on the features of the projection map, and utilizing an iterative algorithm, we were able to reach some findings about the existence of projected solutions for quasi equilibrium problems under the assumption of asymptotic regularity of the generated sequence.
As part of our future goal, we plan first to study new and weaker sufficient conditions providing the asymptotic regularity of the sequence $\{x_n\}$ generated by Algorithm 1. Later, we would  modify our strategy and make use of fixed point theorems for the composition of the  projection map and the solution map while maintaining the same problem assumptions or, even better, trying to weaken them in order to study projected solutions in quasi equilibrium problems.


\end{document}